\title[The Vop\v{e}nka principle and the Vop\v{e}nka scheme]{The Vop\v{e}nka principle is inequivalent to but conservative over the Vop\v{e}nka scheme}
\author{Joel David Hamkins}
 \address[J.~D.~Hamkins]
         {Mathematics, Philosophy, Computer Science, The Graduate Center of The City University of New York, 365 Fifth Avenue, New York, NY 10016 \& Mathematics, College of Staten Island of CUNY}
\email{jhamkins@gc.cuny.edu}
\urladdr{http://jdh.hamkins.org}
\thanks{I would like to thank Victoria Gitman for helpful conversations about the \Vopenka\ principle. My research has been supported by grant \#69573-00 47 from the CUNY Research Foundation. Commentary can be made on the author's blog at \href{http://jdh.hamkins.org/vopenka-principle-vopenka-scheme}{http://jdh.hamkins.org/vopenka-principle-vopenka-scheme}.}
\newtheorem{theorem}{Theorem}
\newtheorem*{maintheorem*}{Main Theorem}
\newtheorem*{maintheorems*}{Main Theorems}
\newtheorem{corollary}[theorem]{Corollary}
\newtheorem*{corollary*}{Corollary}
\newtheorem*{corollaries*}{Corollaries}
\newtheorem{lemma}[theorem]{Lemma}
\newtheorem*{questions*}{Questions}
\newtheorem*{mainquestion*}{Main Question} 
\newtheorem*{openquestion*}{Open Question} 
\newcommand{\QED}{\end{proof}}
\def\proclaim[#1]{{\bf #1}}
\def\BF#1.{{\bf #1.}}
\newcommand\Vopenka{Vop\v{e}nka}
\newcommand{\Godel}{G\"odel}
\newcommand{\Levy}{L\'{e}vy}
\newcommand{\of}{\subseteq}
\newcommand{\set}[1]{\{\,{#1}\,\}}
\newcommand{\elesub}{\prec}
\newcommand{\Add}{\mathop{\rm Add}}
\newcommand{\image}{\mathbin{\hbox{\tt\char'42}}}
\newcommand{\restrict}{\upharpoonright} 
\newcommand{\satisfies}{\models}
\newcommand{\forces}{\Vdash}
\newcommand{\proves}{\vdash}
\newcommand{\union}{\cup}
\newcommand{\intersect}{\cap}
\newcommand{\trianglelt}{\lhd}
\newcommand{\smalllt}{\mathrel{\mathchoice{\raise2pt\hbox{$\scriptstyle<$}}{\raise1pt\hbox{$\scriptstyle<$}}{\raise0pt\hbox{$\scriptscriptstyle<$}}{\scriptscriptstyle<}}}
\newcommand{\smallleq}{\mathrel{\mathchoice{\raise2pt\hbox{$\scriptstyle\leq$}}{\raise1pt\hbox{$\scriptstyle\leq$}}{\raise1pt\hbox{$\scriptscriptstyle\leq$}}{\scriptscriptstyle\leq}}}
\newcommand{\lt}{\smalllt}
\newcommand{\leqlambda}{{{\smallleq}\lambda}}
\newcommand{\ltdelta}{{{\smalllt}\delta}}
\newcommand{\boolval}[1]{\mathopen{\lbrack\!\lbrack}\,#1\,\mathclose{\rbrack\!\rbrack}}
\def\[#1]{\boolval{#1}}
\newbox\gnBoxA
\newdimen\gnCornerHgt
\newdimen\gnArgHgt
\def\gcode #1{%
\setbox\gnBoxA=\hbox{$#1$}%
\gnArgHgt=\ht\gnBoxA%
\ifnum     \gnArgHgt<\gnCornerHgt \gnArgHgt=0pt%
\else \advance \gnArgHgt by -\gnCornerHgt%
\fi \raise\gnArgHgt\hbox{\tiny$\ulcorner$} \box\gnBoxA %
\raise\gnArgHgt\hbox{\tiny$\urcorner$}}
\newcommand{\UnderTilde}[1]{{\setbox1=\hbox{$#1$}\baselineskip=0pt\vtop{\hbox{$#1$}\hbox to\wd1{\hfil$\sim$\hfil}}}{}}
\newcommand{\Undertilde}[1]{{\setbox1=\hbox{$#1$}\baselineskip=0pt\vtop{\hbox{$#1$}\hbox to\wd1{\hfil$\scriptstyle\sim$\hfil}}}{}}
\newcommand{\undertilde}[1]{{\setbox1=\hbox{$#1$}\baselineskip=0pt\vtop{\hbox{$#1$}\hbox to\wd1{\hfil$\scriptscriptstyle\sim$\hfil}}}{}}
\newcommand{\UnderdTilde}[1]{{\setbox1=\hbox{$#1$}\baselineskip=0pt\vtop{\hbox{$#1$}\hbox to\wd1{\hfil$\approx$\hfil}}}{}}
\newcommand{\Underdtilde}[1]{{\setbox1=\hbox{$#1$}\baselineskip=0pt\vtop{\hbox{$#1$}\hbox to\wd1{\hfil\scriptsize$\approx$\hfil}}}{}}
\renewcommand{\iff}{\mathrel{\leftrightarrow}}
\def\<#1>{\left\langle#1\right\rangle}
\newcommand{\Ord}{\mathord{{\rm Ord}}}
\newcommand{\ZFC}{{\rm ZFC}}
\newcommand{\KM}{{\rm KM}}
\newcommand{\GB}{{\rm GB}}
\newcommand{\GBC}{{\rm GBC}}
\newcommand{\AC}{{\rm AC}}
\newcommand\VP{{\rm VP}}
\newcommand\VS{{\rm VS}}
\newcommand{\cell}[1]{\boxit{\hbox to 17pt{\strut\hfil$#1$\hfil}}}
\newcommand{\head}[2]{\lower2pt\vbox{\hbox{\strut\footnotesize\it\hskip3pt#2}\boxit{\cell#1}}}
\newcommand{\boxit}[1]{\setbox4=\hbox{\kern2pt#1\kern2pt}\hbox{\vrule\vbox{\hrule\kern2pt\box4\kern2pt\hrule}\vrule}}
\newcommand{\Col}[3]{\hbox{\vbox{\baselineskip=0pt\parskip=0pt\cell#1\cell#2\cell#3}}}
\newcommand{\tapenames}{\raise 5pt\vbox to .7in{\hbox to .8in{\it\hfill input: \strut}\vfill\hbox to
.8in{\it\hfill scratch: \strut}\vfill\hbox to .8in{\it\hfill output: \strut}}}
\newcommand{\Head}[4]{\lower2pt\vbox{\hbox to25pt{\strut\footnotesize\it\hfill#4\hfill}\boxit{\Col#1#2#3}}}
\newcommand{\Dots}{\raise 5pt\vbox to .7in{\hbox{\ $\cdots$\strut}\vfill\hbox{\ $\cdots$\strut}\vfill\hbox{\
$\cdots$\strut}}}
\newcommand{\df}{\it} 
\begin{document}

\begin{abstract}
 The \Vopenka\ principle, which asserts that every proper class of first-order structures in a common language admits an elementary embedding between two of its members, is not equivalent over \GBC\ to the first-order \Vopenka\ scheme, which makes the \Vopenka\ assertion only for the first-order definable classes of structures. Nevertheless, the two \Vopenka\ axioms are equiconsistent  and they have exactly the same first-order consequences in the language of set theory. Specifically, \GBC\ plus the \Vopenka\ principle is conservative over \ZFC\ plus the \Vopenka\ scheme for first-order assertions in the language of set theory.
\end{abstract}

\maketitle

\noindent
The {\df \Vopenka\ principle} is the assertion that for every proper class $\mathcal{M}$ of first-order $\mathcal{L}$-structures, for a set-sized language $\mathcal{L}$, there are distinct members of the class $M,N\in\mathcal{M}$ with an elementary embedding $j:M\to N$ between them. In quantifying over classes, this principle is a single assertion in the language of second-order set theory, and it makes sense to consider the \Vopenka\ principle in the context of a second-order set theory, such as \Godel-Bernays set theory \GBC, whose language allows one to quantify over classes. In this article, \GBC\ includes the global axiom of choice.

In contrast, the first-order {\df \Vopenka\ scheme} makes the \Vopenka\ assertion only for the first-order definable classes $\mathcal{M}$ (allowing set parameters, and henceforth in this article, by the term ``definable class,'' I shall intend that parameters are allowed). This theory can be expressed as a scheme of first-order statements, one for each possible definition of a class, and it makes sense to consider the \Vopenka\ scheme in Zermelo-Frankael \ZFC\ set theory with the axiom of choice.

Because the \Vopenka\ principle is a second-order assertion, it does not make sense to refer to it in the context of \ZFC\ set theory, whose first-order language does not allow quantification over classes; one typically retreats to the \Vopenka\ scheme in that context. The theme of this article is to investigate the precise meta-mathematical interactions between these two treatments of \Vopenka's idea.

\begin{maintheorems*}\ 
 \begin{enumerate}
  \item If \ZFC\ and the \Vopenka\ scheme holds, then there is a class forcing extension, adding classes but no sets, in which \GBC\ and the \Vopenka\ scheme holds, but the \Vopenka\ principle fails.
  \item If \ZFC\ and the \Vopenka\ scheme holds, then there is a class forcing extension, adding classes but no sets, in which \GBC\ and the \Vopenka\ principle holds.
 \end{enumerate}
\end{maintheorems*}

\noindent 
These results appear as theorems~\ref{Theorem.VS+notVP} and~\ref{Theorem.Forcing-GBC+VP}, respectively. It follows that the \Vopenka\ principle \VP\ and the \Vopenka\ scheme \VS\ are not equivalent, but they are equiconsistent and indeed, they have the same first-order consequences.

\goodbreak
\begin{corollaries*}\
 \begin{enumerate}
   \item Over \GBC, the \Vopenka\ principle and the \Vopenka\ scheme, if consistent, are not equivalent.
   \item Nevertheless, the two \Vopenka\ axioms are equiconsistent over \GBC.
   \item Indeed, the two \Vopenka\ axioms have exactly the same first-order consequences in the language of set theory. Specifically, \GBC\ plus the \Vopenka\ principle is conservative over \ZFC\ plus the \Vopenka\ scheme for assertions in the first-order language of set theory.
         $$\GBC+\VP\proves\phi\qquad\text{if and only if}\qquad\ZFC+\VS\proves\phi$$
 \end{enumerate}
\end{corollaries*}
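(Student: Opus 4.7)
The plan is to derive all three corollaries directly from the two main theorems, with the key meta-mathematical observation being that the class forcings produced by those theorems add no new sets and hence preserve all first-order truth in the language of set theory.

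For corollary~(1), I would note that $\GBC+\VP$ trivially implies $\VS$, since every first-order definable class is in particular a class in the second-order sense, so the quantification over classes in $\VP$ subsumes the \Vopenka\ assertion for each definable class. Thus if $\VP$ and $\VS$ were equivalent over \GBC, every model of $\GBC+\VS$ would satisfy $\VP$. But by the first main theorem, from $\Con(\ZFC+\VS)$ we obtain a model of $\GBC+\VS+\neg\VP$, contradicting equivalence.

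For corollary~(2), one direction is immediate: $\GBC+\VP\proves\ZFC+\VS$, so $\Con(\GBC+\VP)\Rightarrow\Con(\ZFC+\VS)$. The reverse implication is exactly the content of the second main theorem. For corollary~(3), the nontrivial direction is to show $\GBC+\VP\proves\phi$ implies $\ZFC+\VS\proves\phi$ for any first-order set-theoretic sentence $\phi$. I would argue contrapositively: if $\ZFC+\VS\not\proves\phi$, then by the completeness theorem there is a model $V\satisfies\ZFC+\VS+\neg\phi$. Applying the second main theorem inside this model, there is a class forcing extension $V[G]$, adding classes but no sets, in which $\GBC+\VP$ holds. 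Since $V$ and $V[G]$ have exactly the same sets and $\phi$ is a first-order assertion in the language of set theory, $V[G]\satisfies\neg\phi$ as well. Hence $\GBC+\VP+\neg\phi$ is consistent, so $\GBC+\VP\not\proves\phi$. The reverse direction, $\ZFC+\VS\proves\phi\Rightarrow\GBC+\VP\proves\phi$, is immediate since $\GBC+\VP\proves\ZFC+\VS$.

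The main obstacle has already been absorbed into the two main theorems, namely the construction of the class forcing iterations that add no sets while arranging, respectively, failure of \VP\ and satisfaction of \VP. Once those theorems are in hand, the corollaries follow by elementary observations about forcing that preserves first-order set-theoretic truth together with the trivial fact that $\VP$ implies $\VS$ over \GBC.
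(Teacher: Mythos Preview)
Your proposal is correct and follows essentially the same approach as the paper: both derive the corollaries from the two main theorems via completeness, the contrapositive, and the observation that a class-forcing extension adding no sets preserves first-order truth, together with the trivial implication $\VP\Rightarrow\VS$. The only cosmetic slip is your reference to ``class forcing iterations''; the main theorems each use a single class forcing, not an iteration.
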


\noindent These consequences are explained in corollaries~\ref{Corollary.VP-conservative-over-VS} and~\ref{Corollary.VP-equiconsistent-VS}.

\section{$A$-Extendible cardinals}

It turns out that the \Vopenka\ principle and the \Vopenka\ scheme admit a convenient large-cardinal characterization in terms of the class-relativized extendible cardinals, and so let us develop a little of that large-cardinal theory here, before giving the characterization in section~\ref{Section.VP-characterization}.

Namely, we define that a cardinal $\kappa$ is {\df extendible}, if for every ordinal $\lambda>\kappa$ there is an ordinal $\theta$ and an elementary embedding $j:V_\lambda\to V_\theta$ with critical point $\kappa$ and $\lambda<j(\kappa)$. Every such cardinal is, of course, a measurable cardinal, and indeed, a supercompact cardinal, since we may define the induced normal fine measures by $X\in \mu\iff j\image\beta\in j(X)$ for $X\of P_\kappa\beta$, provided $\beta+1<\lambda$.

More generally, we say that a cardinal $\kappa$ is {\df $A$-extendible} for a class $A$, if for every ordinal $\lambda>\kappa$ there is an ordinal $\theta$ and an elementary embedding $$j:\<V_\lambda,\in,A\intersect V_\lambda>\to\<V_\theta,\in,A\intersect V_\theta>$$ with critical point $\kappa$ and $\lambda<j(\kappa)$. Let us refer to $\lambda$ as the {\df degree} of $A$-extendibility of this embedding. Let's show next that the $\lambda<j(\kappa)$ requirement is an inessential convenience.

\begin{lemma}\label{Lemma.A-extendible-iff}
 A cardinal $\kappa$ is $A$-extendible if and only if for every ordinal $\lambda>\kappa$ there is an ordinal $\theta$ and an elementary embedding $j:\<V_\lambda,\in,A\intersect V_\lambda>\to\<V_\theta,\in,A\intersect V_\theta>$ with critical point $\kappa$.
\end{lemma}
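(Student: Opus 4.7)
The forward direction is immediate: any embedding witnessing the full $A$-extendibility condition at degree $\lambda$ in particular has critical point $\kappa$, so the weaker condition holds trivially.

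For the converse, suppose $\kappa$ satisfies the weaker condition, and fix $\lambda > \kappa$; I wish to construct an elementary embedding $j : \langle V_\lambda, \in, A\cap V_\lambda\rangle \to \langle V_\theta, \in, A \cap V_\theta\rangle$ with critical point $\kappa$ and $\lambda < j(\kappa)$. The plan is to invoke the weaker hypothesis at a sufficiently closed ordinal $\lambda^* > \lambda$ and then restrict the resulting embedding to $V_\lambda$. I choose $\lambda^*$ to be a closure point above $\lambda$ of the function sending each $\beta > \kappa$ to the rank of some weak-extendibility witness from $V_\beta$ with critical point $\kappa$; the collection of such $\lambda^*$ is closed and unbounded in \Ord, so $\lambda^*$ exists. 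Applying the weaker hypothesis at $\lambda^*$, I obtain $k : \langle V_{\lambda^*}, \in, A \cap V_{\lambda^*}\rangle \to \langle V_{\theta^*}, \in, A \cap V_{\theta^*}\rangle$ elementary with critical point $\kappa$. The restriction $k \restriction V_\lambda : \langle V_\lambda, \in, A \cap V_\lambda\rangle \to \langle V_{k(\lambda)}, \in, A \cap V_{k(\lambda)}\rangle$ is then elementary with critical point $\kappa$ and image $k(\kappa)$, giving the desired witness whenever $k(\kappa) > \lambda$.

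The main obstacle is the case $k(\kappa) \leq \lambda$. Here the strategy is to combine the elementarity of $k$ with the reflective closure of $\lambda^*$: by the latter, $V_{\lambda^*}$ internally satisfies the assertion that for every $\beta < \lambda^*$ above $\kappa$ there is a weak-extendibility witness from $V_\beta$ with critical point $\kappa$; by elementarity of $k$, the target $V_{\theta^*}$ satisfies the analogous assertion with $k(\kappa)$ in place of $\kappa$ and $\theta^*$ in place of $\lambda^*$. Combined with the fact that $V_{\theta^*}$ itself contains $k \restriction V_\lambda$ as an internal witness of a critical-point-$\kappa$ embedding from $V_\lambda$, a careful transfer should exhibit, inside $V_{\theta^*}$, an embedding from $V_\lambda$ with critical point $\kappa$ whose image of $\kappa$ exceeds $\lambda$; since such set-sized embeddings are absolute between $V_{\theta^*}$ and $V$, this produces the desired witness in $V$. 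I expect the delicate step to be making this transfer precise, since the direct pullback through $k$ is obstructed by the fact that $\kappa$ itself is not in the range of $k$; the resolution should come from choosing the reflecting first-order formula carefully so that its pullback speaks of $\kappa$ rather than only $k(\kappa)$.
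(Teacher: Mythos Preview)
Your proposal has an explicit, acknowledged gap: in the case $k(\kappa)\leq\lambda$ you write that ``a careful transfer should exhibit'' the required embedding and that ``the resolution should come from choosing the reflecting first-order formula carefully.'' This is not yet an argument, and the suggested fix does not work. Any first-order assertion about $\kappa$ true in $\langle V_{\lambda^*},\in,A\cap V_{\lambda^*}\rangle$ is carried by $k$ to the corresponding assertion about $k(\kappa)$ in the target; there is no choice of formula that keeps the parameter $\kappa$ fixed, since $\kappa$ is the critical point and is not in the range of $k$. What elementarity actually buys you in $V_{\theta^*}$ is weak $A$-extendibility for $k(\kappa)$, and composing such an embedding $h:\langle V_{k(\lambda)},\in,A\cap V_{k(\lambda)}\rangle\to\langle V_\eta,\in,A\cap V_\eta\rangle$ with $k\restriction V_\lambda$ does give an embedding with critical point $\kappa$ and image $h(k(\kappa))$; but weak extendibility provides no lower bound on $h(k(\kappa))$, so you are back where you started.

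The missing idea is what the paper supplies: rather than trying to finish in one step, prove by induction on $\alpha$ that for every $\alpha$ there is a degree-$\lambda$ embedding with critical point $\kappa$ and $j(\kappa)>\alpha$. At the inductive step for $\alpha$, choose $\bar\lambda$ large enough that witnesses for all $\beta<\alpha$ live in $V_{\bar\lambda}$, take a weak-extendibility embedding $j$ on $V_{\bar\lambda}$, and invoke the Kunen inconsistency to find $\beta<\alpha<j(\beta)$; then compose $j$ with an inductively obtained $h$ satisfying $h(\kappa)>\beta$ to get $(j\circ h)(\kappa)>j(\beta)>\alpha$. Your closure-point setup is compatible with this, but without the inductive composition (or an equivalent iteration of the critical sequence using Kunen) the argument does not close.
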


\begin{proof}
The point here is that in the latter property, we have dropped the requirement that $\lambda<j(\kappa)$. For the case of extendible cardinals, where there is no predicate $A$, this equivalence is proved in Kanamori's book~\cite[proposition~23.15]{Kanamori2004:TheHigherInfinite2ed}, and one may simply carry his argument through here with the predicate $A$. But let me give an alternative slightly simpler argument. The forward implication is immediate. Conversely, suppose that $\kappa$ is $A$-extendible with respect to the weaker property, and consider any ordinal $\lambda>\kappa$. What I claim is that for every ordinal $\alpha$, there is an elementary embedding $j:\<V_\lambda,\in,A\intersect V_\lambda>\to \<V_\theta,\in,A\intersect V_\theta>$ with critical point $\kappa$ and $j(\kappa)>\alpha$. The case $\alpha=\lambda$ then proves the lemma. I shall prove the claim by induction on $\alpha$. The statement is clearly true for all ordinals $\alpha$ up to the next inaccessible cardinal above $\kappa$, since $j(\kappa)$ will be at least that large regardless. Suppose by induction that the statement is true for all $\beta<\alpha$, where $\kappa<\alpha$. Let $\bar\lambda$ be larger than $\lambda$ and $\alpha+2$ and large enough so that the embeddings witnessing the induction assumption all exist inside $V_{\bar\lambda}$. By $A$-extendibility, there is an ordinal $\theta$ and an elementary embedding $j:\<V_{\bar\lambda},\in,A\intersect V_{\bar\lambda}>\to \<V_\theta,\in,A\intersect V_\theta>$ with critical point $\kappa$. It cannot be that $\alpha$ is closed under $j$, meaning that $j\image\alpha\of\alpha$, since if this were true then the critical sequence would be entirely below $\alpha$, and the supremum of the critical sequence would be a fixed point of $j$ below or at $\alpha$, contrary to the Kunen inconsistency. So there is some ordinal $\beta$ with $\beta<\alpha<j(\beta)$. By the choice of $\bar\lambda$, there is inside $V_{\bar\lambda}$ an elementary embedding $h:\<V_\lambda,\in,A\intersect V_\lambda>\to\<V_\eta,\in,A\intersect V_\eta>$ with critical point $\kappa$ and $h(\kappa)>\beta$. Since this embedding is an element of $V_{\bar\lambda}$, we have $\eta<\bar\lambda$, and so the composition $j\circ h$ makes sense. Since $h$ is elementary from $\<V_\lambda,\in,A\intersect V_\lambda>$ to $\<V_\eta,\in,A\intersect V_\eta>$ and $j\restrict V_\eta$ is elementary from $\<V_\eta,\in,A\intersect V_\eta>$ to $\<V_{j(\eta)},\in,A\intersect V_{j(\eta)}>$, it follows that $j\circ h$ is an elementary embedding from $\<V_\lambda,\in,A\intersect V_\lambda>$ to $\<V_{j(\eta)},\in,A\intersect V_{j(\eta)}>$, with critical point $\kappa$. Since $\beta<h(\kappa)$, it follows that $j(\beta)<j(h(\kappa))=(j\circ h)(\kappa)$. Since $\alpha<j(\beta)$, we have therefore verified the desired property for $\alpha$. By induction, therefore, every ordinal $\alpha$ has this property, and so in particular, there is an elementary embedding witnessing the $A$-extendibility of $\kappa$ for which $j(\kappa)>\lambda$, as desired.
\end{proof}

Note that if $j:\<V_{\lambda+1},\in,A\intersect V_{\lambda+1}>\to\<V_{\theta+1},\in,A\intersect V_{\theta+1}>$ is an $A$-extendibility embedding of degree $\lambda+1$, then by extracting the induced extender embedding and applying that embedding to all of $V$, we may produce an extender ultrapower embedding $j:V\to M$ which agrees with the original embedding $j$ on $V_\lambda$. In particular, we'll have critical point $\kappa$, $\lambda<j(\kappa)$, $V_{j(\lambda)}\of M$ and $j(A\intersect V_\lambda)=A\intersect V_\theta$. This provides another characterization of $A$-extendibility.

If $\kappa$ is $A$-extendible, then let us define that a set $g\of\kappa$ is {\df $A$-stretchable}, if for every $\lambda>\kappa$ and every $h\of\lambda$ for which $h\intersect\kappa=g$, there is an elementary embedding $j:\<V_\lambda,\in,A\intersect V_\lambda>\to \<V_\theta,\in,A\intersect V_\theta>$ with critical point $\kappa$ and $\lambda<j(\kappa)$, for which $j(g)\intersect\lambda =h$. Thus, an $A$-stretchable set $g$ is one that can be stretched by an $A$-extendibility embedding so as to agree with any desired $h$ extending $g$.

\begin{theorem}\label{Theorem.Stretchable-set}
 If $\kappa$ is $A$-extendible for some class $A$, then there is an $A$-stretchable set $g\of\kappa$.
\end{theorem}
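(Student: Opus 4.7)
The plan is to mount a Laver-style diagonal argument adapted to the extendibility context. Suppose for contradiction that no $A$-stretchable set exists. Then, by global choice, for each $g\of\kappa$ fix the lexicographically least pair $(\lambda_g,h_g)$ witnessing the failure of stretchability. Since $\mathcal{P}(\kappa)$ is a set, the range of the counterexample function $F:g\mapsto(\lambda_g,h_g)$ is a set, and we may fix a set-sized ordinal $\Lambda$ exceeding every such $\lambda_g$ and large enough to house the other relevant data.

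I would next define a canonical $g^*\of\kappa$ by transfinite recursion on $\alpha<\kappa$: declare $\alpha\in g^*$ iff $\alpha\in h_{g^*\intersect\alpha}$. This parallels how Laver's function is set up so as to encode its own minimal counterexample. Under our contradiction hypothesis no subset of $\kappa$ is stretchable, so the pairs $(\lambda_{g^*\intersect\alpha},h_{g^*\intersect\alpha})$ are defined at every stage, and since $\alpha<\kappa<\lambda_{g^*\intersect\alpha}$ the membership question $\alpha\in h_{g^*\intersect\alpha}$ is sensible. Having defined $g^*$, I would invoke $A$-extendibility at a level beyond $\Lambda$ and pass through the extender extension described just before the theorem to obtain an elementary embedding $j:V\to M$ with critical point $\kappa$, $\Lambda<j(\kappa)$, $V_{j(\Lambda)}\of M$, and $j(A\intersect V_\Lambda)=A\intersect V_{j(\Lambda)}$.

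The intended finale is that elementarity, together with the absoluteness of the counterexample function $s\mapsto h_s$ between $V$ and $M$ on the relevant bounded initial segment (secured by the agreement $V_{j(\Lambda)}\of M$ and $M$'s access to $A\intersect V_{j(\Lambda)}$), forces the recursion performed inside $M$ to compute $j(g^*)\intersect\lambda_{g^*}=h_{g^*}$. Since $j\restrict V_{\lambda_{g^*}}$ is then an $A$-extendibility embedding of degree $\lambda_{g^*}$ with $j(g^*)\intersect\lambda_{g^*}=h_{g^*}$, this directly contradicts the defining counterexample property of the pair $(\lambda_{g^*},h_{g^*})$, completing the proof.

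The main obstacle, where I expect most of the technical work to lie, is exactly this absoluteness transfer. A naive application of elementarity would replace $\kappa$ by $j(\kappa)$ throughout the recursion and so deliver the wrong counterexamples in $M$; the delicate point is to arrange the recursion so that the bounded counterexample data at the levels $\lambda_{g^*\intersect\alpha}<\Lambda$ is correctly identified between $V$ and $M$, leveraging the closure $V_{j(\Lambda)}\of M$ and the $A$-agreement supplied by the extender extension of $A$-extendibility rather than relying on elementarity alone.
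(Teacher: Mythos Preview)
Your recursion collapses. By definition, the counterexample $h_g$ to the stretchability of $g\of\kappa$ must satisfy $h_g\intersect\kappa=g$. Applying this with $g=g^*\intersect\alpha$ (viewed as a subset of $\kappa$) gives $h_{g^*\intersect\alpha}\intersect\kappa=g^*\intersect\alpha$, so for every $\alpha<\kappa$ we have $\alpha\notin h_{g^*\intersect\alpha}$. Your rule ``$\alpha\in g^*$ iff $\alpha\in h_{g^*\intersect\alpha}$'' therefore yields $g^*=\emptyset$, and the diagonal construction carries no information.

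This is not merely a notational slip; it is exactly the manifestation of the obstacle you flag at the end. Your counterexample function $F$ is defined only for subsets of $\kappa$ and refers throughout to embeddings with critical point $\kappa$, so it cannot be used to build $g^*$ below $\kappa$ in any meaningful way, and after applying $j$ the relevant function becomes one about subsets of $j(\kappa)$ and critical point $j(\kappa)$, not about $g^*$ at $\kappa$. The paper's construction avoids this by letting the critical point vary: at stage $\gamma<\kappa$ one looks for the least $\lambda$ and $\trianglelt$-least $h\of\lambda$ with $h\intersect\gamma=g\intersect\gamma$ that is not anticipated by any $A$-extendibility embedding \emph{with critical point $\gamma$}, and then sets $g\restrict\lambda=h$ (a reflection argument gives $\lambda<\kappa$). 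Under $j$, the stage-$\kappa$ step of the construction of $j(g)$ then literally asks for the least unanticipated $h'$ extending $g$ relative to embeddings with critical point $\kappa$, and $j\restrict V_\lambda$ itself witnesses that this $h'$ is anticipated---the contradiction. In other words, the absoluteness issue is dissolved by building the local counterexample data into the recursion at each level $\gamma$, rather than trying to transfer a single $\kappa$-indexed counterexample function across $j$.
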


\begin{proof}
We define the initial segments of $g$ in stages. Fix a well-ordering $\trianglelt$ of $V_\kappa$. If $g\intersect\gamma$ is defined, then let $\lambda>\gamma$ be least such that there is some $h\of\lambda$ extending $g\intersect\gamma$, such that there is no elementary embedding $j:\<V_\lambda,\in,A\intersect V_\lambda>\to\<V_\theta,\in,A\intersect V_\theta>$ with critical point $\gamma$ and $\lambda<j(\gamma)$, for which $j(g\intersect\gamma)\intersect\lambda=h$. That is, $g\intersect\gamma$ does not anticipate $h$ with respect to any $A$-extendibility embedding of degree $\lambda$. An easy reflection argument shows that $\lambda<\kappa$. For this minimal $\lambda$, let $h$ be the $\trianglelt$-least such set that is not anticipated, and define $g\restrict\lambda=h$. This procedure defines $g$ all the way up to $\kappa$.

To see that $g$ is $A$-stretchable, suppose that there is some $\lambda$ and set $h\of\lambda$ extending $g$ for which there is no $A$-extendibility embedding $j:\<V_\lambda,\in,A\intersect V_\lambda>\to\<V_\theta,\in,A\intersect V_\theta>$ with $j(g)\intersect\lambda=h$. Let $\bar\lambda>\lambda$ be large enough so that this assertion about $\lambda,\kappa,g,h$ is absolute to $\<V_{\bar\lambda},\in,A\intersect V_{\bar\lambda}>$. By $A$-extendibility, there is an ordinal $\bar\theta$ and an elementary embedding $j:\<V_{\bar\lambda},\in,A\intersect V_{\bar\lambda}>\to \<V_{\bar\theta},\in,A\intersect V_{\bar\theta}>$ with critical point $\kappa$ and $\bar\lambda<j(\kappa)$. The set $j(g)\of j(\kappa)$ is defined by the same procedure as $g$, except using $j(\trianglelt)$, which is a well-order of $V_{j(\kappa)}$. At stages below $\kappa$, of course this procedure agrees with what happened in the construction of $g$. At stage $\kappa$, since $\bar\lambda\leq\theta$, the model will agree that $\lambda$ is least for which $g$ has no $A$-extension to some set $h'\of\lambda$ extending $g$, and so it will select the $j(\trianglelt)$-least such $h'$ and define $j(g)\intersect\lambda=h'$. So $\<V_{\bar\theta},\in,A\intersect V_{\bar\theta}>$ will believe that there is no $A$-extendibility embedding for $\kappa$ of degree $\lambda$, for which $g$ anticipates $h'$. But this contradicts the fact that $j\restrict V_\lambda$ is precisely such an embedding, and it is an element of $V_{\bar\theta}$. Therefore, $g$ must be $A$-stretchable, as desired.
\end{proof}

If one were to make a bounded number of changes to $g$ below $\kappa$, then it would not affect the stretchability feature, and consequently every bounded subset of $\kappa$ can be extended to an $A$-stretchable set $g$. I shall make use of this observation in the proof of theorem~\ref{Theorem.Forcing-GBC+VP}.

One should look upon stretchability as a form of the Laver diamond property for $A$-extendibility. (See \cite{Hamkins:LaverDiamond} for diverse versions of the Laver diamond for various large cardinals.) Specifically, if $\kappa$ is $A$-extendible, then a function $\ell:\kappa\to V_\kappa$ is an $A$-extendibility {\df Laver function}, if for every $\lambda$ and every target $a$, there is an elementary embedding $j:\<V_\lambda,\in,A\intersect V_\lambda>\to\<V_\theta,\in,A\intersect V_\theta>$ with critical point $\kappa$ and $\lambda<j(\kappa)$, for which $j(\ell)(\kappa)=a$. Essentially the same argument as in theorem~\ref{Theorem.Stretchable-set} establishes:

\begin{theorem} 
 If $\kappa$ is $A$-extendible, then there is an $A$-extendibility Laver function $\ell:\kappa\to V_\kappa$. 
\end{theorem}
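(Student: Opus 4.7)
The plan is to adapt the construction of the stretchable set $g$ from Theorem~\ref{Theorem.Stretchable-set} to the Laver-function setting, making the obvious changes: instead of filling in an initial segment $g\intersect\lambda$ chunk-by-chunk, I record a single value $\ell(\gamma)\in V_\kappa$ at each stage. Fix a well-ordering $\trianglelt$ of $V_\kappa$ and define $\ell$ by recursion on $\gamma<\kappa$. Given $\ell\restrict\gamma$, let $\lambda>\gamma$ be the least ordinal such that there exists a target $a\in V_\lambda$ that is \emph{not anticipated} at stage $\gamma$, meaning that no elementary embedding $j:\<V_\lambda,\in,A\intersect V_\lambda>\to\<V_\theta,\in,A\intersect V_\theta>$ with critical point $\gamma$ and $\lambda<j(\gamma)$ satisfies $j(\ell\restrict\gamma)(\gamma)=a$. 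Let $\ell(\gamma)$ be the $\trianglelt$-least such $a$ (and $\emptyset$ if every target is already anticipated). A reflection argument as in Theorem~\ref{Theorem.Stretchable-set} ensures that such a $\lambda$, when it exists, may be found below $\kappa$.

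To verify that $\ell$ is an $A$-extendibility Laver function, I would argue by contradiction in direct analogy with the stretchability verification. Suppose some $\lambda>\kappa$ and some target $a$ witness failure, so that no $A$-extendibility embedding $j$ of degree $\lambda$ for critical point $\kappa$ has $j(\ell)(\kappa)=a$. Pick $\bar\lambda>\lambda$ large enough that this failure is absolute to $\<V_{\bar\lambda},\in,A\intersect V_{\bar\lambda}>$ and that $a\in V_{\bar\lambda}$. By the $A$-extendibility of $\kappa$, obtain an elementary embedding
$$j:\<V_{\bar\lambda},\in,A\intersect V_{\bar\lambda}>\to\<V_{\bar\theta},\in,A\intersect V_{\bar\theta}>$$
with critical point $\kappa$ and $\bar\lambda<j(\kappa)$. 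By elementarity, $j(\ell)$ is constructed in $\<V_{\bar\theta},\in,A\intersect V_{\bar\theta}>$ by the same recursion, now using $j(\trianglelt)$; it agrees with $\ell$ on $\kappa$, and at stage $\kappa$ the procedure, seeing the absolute failure at $\lambda$, selects the least witnessing $\lambda$ and the $j(\trianglelt)$-least unanticipated target $a'\in V_\lambda$, setting $j(\ell)(\kappa)=a'$.

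The contradiction closes exactly as in the stretchable case: the restriction $j\restrict V_\lambda$ is an $A$-extendibility embedding of $\kappa$ of degree $\lambda$ with $\lambda<\bar\lambda<j(\kappa)$, lies inside $V_{\bar\theta}$ because $\bar\lambda$ was chosen sufficiently large, and realizes $j(\ell)(\kappa)=a'$. Thus $\<V_{\bar\theta},\in,A\intersect V_{\bar\theta}>$ in fact witnesses an $A$-extendibility embedding anticipating $a'$, contradicting the recursive choice of $a'$ as unanticipated. I expect the main bookkeeping hurdle to be ensuring that $\ell\restrict\gamma$ remains in $V_\lambda$ throughout the recursion (so that the embeddings can act on it), handled by taking $\ell(\gamma)$ in $V_\lambda$ for the chosen minimal $\lambda$ at stage $\gamma$, together with verifying that the statements ``some $a\in V_\lambda$ is unanticipated'' and ``$a'$ is the $j(\trianglelt)$-least such'' are absolute between $V$ and the rank-initial segments $V_{\bar\lambda}$, $V_{\bar\theta}$ that we work inside of.
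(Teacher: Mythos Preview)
Your proposal is correct and follows essentially the same direct argument as the paper: define $\ell(\gamma)$ to be the $\trianglelt$-least unanticipated target at the least bad $\lambda$, then derive a contradiction by applying an $A$-extendibility embedding $j$ of large degree $\bar\lambda$ and observing that $j\restrict V_\lambda$ anticipates the value $j(\ell)(\kappa)$ that was supposed to be unanticipated. The paper additionally notes a one-line alternative: code targets as sets of ordinals and decode $\ell$ from the stretchable set $g$ already constructed in Theorem~\ref{Theorem.Stretchable-set}.
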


\begin{proof}
This can be seen as an immediate consequence of theorem~\ref{Theorem.Stretchable-set}, simply by coding the object $a$ as a set of ordinals, and decoding $\ell$ from the stretchable set $g$. 

Alternatively, we may also undertake a direct argument. Namely, if $\ell\restrict\gamma$ is defined, then let $\lambda$ be least such that some object $a$ is not anticipated by any $A$-extendibility embedding $j:\<V_\lambda,\in,A\intersect V_\lambda>\to\<V_\theta,\in,A\intersect V_\theta>$ with critical point $\gamma$. Let $\ell(\gamma)$ be the least such $a$ with respect to a fixed well-ordering $\trianglelt$ of $V_\kappa$. A reflection argument shows that $\lambda<\kappa$ and $a\in V_\kappa$. If the resulting $\ell:\kappa\to V_\kappa$ does not have the Laver function property, let $\lambda$ be least such that there is some $a$ that is not anticipated by any $A$-extendibility embedding of degree $\lambda$. Let $\bar\lambda>\lambda$ be large enough so that this property is seen in $V_{\bar\lambda}$, and fix an elementary embedding $j:\<V_{\bar\lambda},\in,A\intersect V_{\bar\lambda}>\to\<V_{\bar\theta},\in,A\intersect V_{\bar\theta}>$ with critical point $\kappa$ and $\bar\lambda<j(\kappa)$. In $V_{\bar\theta}$, the definition of $j(\ell)(\kappa)$ will find the same $\lambda$ and pick some $a'$ not anticipated by any $A$-extendibility embedding of degree $\lambda$. But this is contradicted by the existence of $j\restrict V_\lambda$ itself, which exists in $V_{\bar\theta}$.
\end{proof}

\begin{corollary}
 In \GBC, for any class $A$ there is a class function $\ell:\Ord\to V$, such that whenever $\kappa$ is $A$-extendible, then $\ell\restrict\kappa$ is an $A$-extendible Laver function for $\kappa$.
\end{corollary}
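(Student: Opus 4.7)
The plan is to run the construction from Theorem~3 globally, uniformly across all ordinals, using the class well-ordering of $V$ available from the global axiom of choice. Observe that the inductive step in the proof of Theorem~3 does not actually depend on knowing $\kappa$ in advance: at stage $\gamma$, the value $\ell(\gamma)$ is determined purely by $\ell\restrict\gamma$, the class $A$, and the well-ordering $\trianglelt$, without reference to any ambient large cardinal. So the same recipe defines $\ell$ as a class function on $\Ord$.

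First, using the global axiom of choice, fix a class well-ordering $\trianglelt$ of $V$. Next, define $\ell:\Ord\to V$ by class transfinite recursion: assuming $\ell\restrict\gamma$ is defined, let $\lambda_\gamma$ be the least ordinal $\lambda$, if any exists, for which some object $a$ fails to be anticipated by $\ell\restrict\gamma$ via an $A$-extendibility embedding $j:\<V_\lambda,\in,A\intersect V_\lambda>\to\<V_\theta,\in,A\intersect V_\theta>$ with critical point $\gamma$ and $\lambda<j(\gamma)$, meaning no such $j$ satisfies $j(\ell\restrict\gamma)(\gamma)=a$. Set $\ell(\gamma)$ to be the $\trianglelt$-least such unanticipated $a$; if no such $\lambda_\gamma$ exists, set $\ell(\gamma)=\emptyset$. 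The step operation $\ell\restrict\gamma\mapsto\ell(\gamma)$ is first-order definable in the class parameters $A$ and $\trianglelt$, so this class-length recursion is legitimate in \GBC, producing a bona fide class function $\ell:\Ord\to V$.

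It then remains to verify that for any $A$-extendible $\kappa$, the restriction $\ell\restrict\kappa$ is an $A$-extendibility Laver function for $\kappa$, and this is exactly the diagonal reflection argument given in Theorem~3. Suppose otherwise: let $\lambda$ be least such that some $a$ is not anticipated at an $A$-extendibility embedding of degree $\lambda$ with critical point $\kappa$. Choose $\bar\lambda>\lambda$ large enough that this failure is absolute to $\<V_{\bar\lambda},\in,A\intersect V_{\bar\lambda}>$, and fix an elementary embedding $j:\<V_{\bar\lambda},\in,A\intersect V_{\bar\lambda}>\to\<V_{\bar\theta},\in,A\intersect V_{\bar\theta}>$ with critical point $\kappa$ and $\bar\lambda<j(\kappa)$. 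Inside $V_{\bar\theta}$, the class function $j(\ell)$ is defined by precisely the analogous recursion using $j(\trianglelt)$, so at stage $\kappa$ it locates the same minimal $\lambda$ and selects some $j(\trianglelt)$-least unanticipated $a'$. But $j\restrict V_\lambda\in V_{\bar\theta}$ is an $A$-extendibility embedding of degree $\lambda$ with critical point $\kappa$ for which $j(\ell\restrict\kappa)(\kappa)=a'$, contradicting that $a'$ is unanticipated.

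The only genuine subtlety is the appeal to class transfinite recursion, since the class $A$ and the global well-ordering $\trianglelt$ appear as parameters in the step operation. This is legitimate in \GBC\ because the step is first-order over $\<V,\in,A,\trianglelt>$ and class recursion along $\Ord$ for such first-order step operations is one of the standard principles provable in \GBC; everything else is a direct transcription of the proof of Theorem~3 with $\kappa$ replaced by an arbitrary $A$-extendible cardinal at verification time rather than at construction time.
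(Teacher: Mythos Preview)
Your proposal is correct and takes essentially the same approach as the paper: the paper's proof is a one-liner observing that a global well-ordering $\trianglelt$ lets one run the construction of Theorem~3 uniformly all the way up $\Ord$, and you have spelled out precisely that argument in detail, including the verification step and the legitimacy of the class recursion in \GBC.
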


\begin{proof}
The point is that a global well-ordering $\trianglelt$ provides a uniform method to define the Laver function $\ell$ all the way up to $\Ord$. 
\end{proof}

If one does not have global choice, then one still can construct at least an {\df ordinal-anticipating} Laver function $\ell:\Ord\to\Ord$, such that for any $A$-extendible cardinal, the function $\ell\restrict\kappa$ has the Laver function property as far as anticipating ordinals is concerned. In particular, this function will have the $A$-extendibility {\df Menas} property. 

Although I won't require this in the proof of the main theorems, let me anyway investigate the degree of reflectivity and correctness for $A$-extendible cardinals. An ordinal $\gamma$ is {\df $\Sigma_n(A)$-correct}, if $\<V_\gamma,\in,A\intersect V_\gamma>\elesub_{\Sigma_n}\<V,\in,A>$. A cardinal $\kappa$ is {\df $\Sigma_n(A)$-reflecting}, if it is inaccessible and $\Sigma_n(A)$-correct.

\begin{theorem}
 If $\kappa$ is $A$-extendible for a class $A$, then $\kappa$ is $\Sigma_2(A)$-reflecting. If $\kappa$ is $A\oplus C$-extendible, where $C$ is the class of all $\Sigma_1(A)$-correct ordinals, then $\kappa$ is $\Sigma_3(A)$-reflecting.
\end{theorem}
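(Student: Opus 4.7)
The plan for the first assertion is to derive $\Pi_1(A)$-absoluteness between $V_\kappa$ and $V$ from $A$-extendibility and then bootstrap to $\Sigma_2(A)$-absoluteness. Inaccessibility of $\kappa$ is immediate, since forgetting the $A$-predicate already yields ordinary extendibility. For $\Pi_1(A)$-absoluteness, the downward direction is standard transitivity; for the upward direction, suppose $V_\kappa\models\forall z\,\chi(z,a,A\intersect V_\kappa)$ with $\chi\in\Delta_0(A)$ and $a\in V_\kappa$, and assume toward a contradiction that some $z_0\in V$ witnesses $\neg\chi(z_0,a,A)$. Choosing $\lambda>\kappa$ with $z_0\in V_\lambda$, lemma~\ref{Lemma.A-extendible-iff} supplies $j\colon\<V_\lambda,\in,A\intersect V_\lambda>\to\<V_\theta,\in,A\intersect V_\theta>$ with critical point $\kappa$ and $\lambda<j(\kappa)$. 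Viewing ``$V_\kappa\models\forall z\,\chi(z,a)$'' as a first-order fact about the set $V_\kappa$ inside $V_\lambda$, elementarity transports it to ``$V_{j(\kappa)}\models\forall z\,\chi(z,a)$'' inside $V_\theta$, since $j(a)=a$ and $j(A\intersect V_\kappa)=A\intersect V_{j(\kappa)}$. As $V_{j(\kappa)}$ is transitive and $z_0\in V_\lambda\of V_{j(\kappa)}$, this forces $\chi(z_0,a,A)$ by $\Delta_0$-absoluteness, a contradiction. The $\Sigma_2(A)$ case $\phi(a)\equiv\exists y\,\psi(y,a)$ with $\psi\in\Pi_1(A)$ then follows: the upward direction uses $\Pi_1(A)$-absoluteness on a $V_\kappa$-witness; for the downward direction, a $V$-witness $y_0\in V_\lambda$ satisfies $V_{j(\kappa)}\models\psi(y_0,a)$ by downward $\Pi_1(A)$-absoluteness, hence $V_{j(\kappa)}\models\phi(a)$, and $j$-elementarity pulls this back to $V_\kappa\models\phi(a)$ by absoluteness of satisfaction over the set $V_\kappa$.

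For the second assertion, the plan is to reduce $\Sigma_3(A)$ to $\Sigma_2(A\oplus C)$ and apply the first assertion with the enriched predicate. Since $(A\oplus C)$-extendibility implies $A$-extendibility, the first assertion gives $V_\kappa\prec_{\Sigma_2(A)}V$, so that $\kappa\in C$ and $C\intersect V_\kappa$ is precisely the set of $\Sigma_1(A\intersect V_\kappa)$-correct ordinals of $V_\kappa$; applying the first assertion to the predicate $A\oplus C$ then yields $V_\kappa\prec_{\Sigma_2(A\oplus C)}V$. The key coding is that for any $\Sigma_1(A)$ formula $\theta(y,z,A)$, the $\Pi_2(A)$ statement $\forall z\,\theta(y,z,A)$ is equivalent to the $\Pi_1(A\oplus C)$ statement
$$\forall\gamma\,\bigl[\,\gamma\in C\wedge y\in V_\gamma\to V_\gamma\models\forall z\,\theta(y,z,A\intersect V_\gamma)\,\bigr],$$
using $\Sigma_1(A)$-correctness of $C$-ordinals together with the Levy--Montague unboundedness of $C$. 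Consequently every $\Sigma_3(A)$ formula is equivalent to a $\Sigma_2(A\oplus C)$ formula, and because this translation remains valid inside $V_\kappa$ with $C\intersect V_\kappa$ replacing $C$, $\Sigma_2(A\oplus C)$-correctness of $V_\kappa$ upgrades to $\Sigma_3(A)$-correctness.

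The main delicate point throughout is the handling of predicates under the embedding. For the first assertion, one must recognize ``$V_\kappa\models\phi(a)$'' as a first-order fact about the set $V_\kappa$ inside $V_\lambda$ so that it can be pushed or pulled through $j$, while correctly tracking the action $A\intersect V_\kappa\mapsto A\intersect V_{j(\kappa)}$. For the second assertion, the translation from $\Sigma_3(A)$ to $\Sigma_2(A\oplus C)$ hinges on $C\intersect V_\kappa$ genuinely capturing $\Sigma_1(A\intersect V_\kappa)$-correctness in $V_\kappa$, which is exactly what $V_\kappa\prec_{\Sigma_1(A)}V$ provides. Once this bookkeeping is in place, both parts proceed as described.
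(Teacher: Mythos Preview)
Your argument for the first assertion is correct and essentially matches the paper's: both push a witness into $V_{j(\kappa)}$ via an $A$-extendibility embedding and pull the resulting satisfaction fact back to $V_\kappa$ by elementarity. You organize the induction through $\Pi_1(A)$ upward absoluteness while the paper phrases it via $\Sigma_1(A)$ downward absoluteness, but these are dual formulations of the same mechanism.

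For the second assertion your route genuinely differs from the paper's. The paper argues directly: from $\kappa\in C$ and the fact that $j$ respects the $C$-predicate, one gets $j(\kappa)\in C$, so $V_{j(\kappa)}$ is itself $\Sigma_1(A)$-correct; a $V$-witness $x$ to a $\Sigma_3(A)$ statement then lands in $V_{j(\kappa)}$, the $\Pi_2(A)$ matrix survives there by $\Sigma_1(A)$-correctness of $j(\kappa)$, and one pulls back. Your approach instead performs a uniform syntactic reduction of $\Sigma_3(A)$ to $\Sigma_2(A\oplus C)$ and invokes the first assertion for the enriched predicate. Both are sound. The paper's argument is shorter and makes transparent exactly where the hypothesis on $C$ enters (namely, to guarantee $j(\kappa)$ is $\Sigma_1(A)$-correct). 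Your translation argument is more modular and immediately suggests the general pattern---adjoining the class of $\Sigma_n(A)$-correct ordinals as a predicate lifts the reflection level by one---at the cost of verifying that $C\cap V_\kappa$ really is the $V_\kappa$-internal class of $\Sigma_1(A)$-correct ordinals and that $C$ is unbounded in $\kappa$, which you correctly derive from $V_\kappa\prec_{\Sigma_2(A)}V$. One small point you leave implicit is that your displayed formula is genuinely $\Pi_1(A\oplus C)$; this requires using the $\Sigma_1$ side of the $\Delta_1$-definability of $\gamma\mapsto V_\gamma$ in the antecedent, but that is routine.
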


\begin{proof}
First, let's notice that every $A$-extendible $\kappa$ is $\Sigma_1(A)$-reflecting. Upward absoluteness is immediate. Conversely, suppose that there is some $x$ for which $\varphi(x,a)$, where all quantifiers are bounded (but there is a predicate for $A$). The witness $x$ exists in some $V_\lambda$, and so $\<V_\lambda,\in,A\intersect V_\lambda>\satisfies\varphi(x,a)$. Let $j:\<V_\lambda,\in,A\intersect V_\lambda>\to\<V_\theta,\in,A\intersect V_\theta>$ witness the $A$-extendibility of $\kappa$, with $\lambda<j(\kappa)$. Thus, $x\in V_{j(\kappa)}$, and so $\<V_{j(\kappa)},\in,A\intersect V_{j(\kappa)}>$ satisfies $\exists x\, \varphi(x,a)$. By elementarity, therefore, this is also true in $\<V_\kappa,\in,A\intersect V_\kappa>$, as desired.

Also every $A$-extendible cardinal is $\Sigma_2(A)$-reflecting. Again, the upward absoluteness is immediate by the previous paragraph. If there is some $x$ for which $\forall y\, \varphi(x,y,a)$, where $\varphi$ has only bounded quantifiers (and $A$ may appear as a predicate, then pick $\lambda$ large enough so that $x\in V_\lambda$, and again consider the embedding $j$ as above. Note again that $x\in V_\lambda\of V_{j(\kappa)}$. And since $\varphi(x,y,a)$ hold for all $y$, we see that $\<V_{j(\kappa)},\in,A\intersect V_{j(\kappa)}>\satisfies\exists x\forall y\, \varphi(x,y,a)$, and so again this pulls back to $\<V_\kappa,\in,A\intersect V_\kappa>$ by elementarity, as desired.

Finally, consider $\Sigma_3(A)$, and suppose now that $\kappa$ is $A\oplus C$-extendible, where $C$ is the class of $\Sigma_1(A)$-correct ordinals. The upward absoluteness of $\Sigma_3(A)$ from $\<V_\kappa,\in,A\intersect V_\kappa>$  follows from the $\Sigma_2(A)$-correctness of $\kappa$ established in the previous paragraph. So suppose that $\exists x\forall y\exists z\, \varphi(x,y,z,a)$ holds in $\<V,\in,A>$ for some $a\in V_\kappa$. Let $\lambda$ be large enough so that the witness $x$ is in $V_\lambda$. Let $j:\<V_\lambda,\in,A\intersect V_\lambda,C\intersect\lambda>\to\<V_\theta,\in,A\intersect V_\theta,C\intersect \theta>$ witness the $A\oplus C$-extendibility of $\kappa$. Since $\kappa\in C$, it follows that $j(\kappa)\in C$ also, and so $j(\kappa)$ is $\Sigma_1(A)$-correct. Thus, since $x\in V_\lambda\of V_{j(\kappa)}$ and $\<V,\in,A>\satisfies\forall y\exists z\, \varphi(x,y,z,a)$, it follows that $\<V_{j(\kappa)},\in,A\intersect V_{j(\kappa)}>\satisfies \forall y\exists z\, \varphi(x,y,z,a)$. Consequently, this model  satisfies the assertion $\exists x\forall y\exists z\, \varphi(x,y,z,a)$, and so we may pull back the statement to $\<V_\kappa,\in,A\intersect V_\kappa>$, thereby verifying this instance of the $\Sigma_3(A)$-correctness of $\kappa$, as desired.
\end{proof}

Note that without the predicate $A$, every $\beth$-fixed point is $\Sigma_1$-correct, since this is the \Levy\ refection theorem. Thus, when there is no $A$, we get for free that $j(\kappa)$ is $\Sigma_1$-correct, and so every extendible cardinal is $\Sigma_3$-reflecting. It is not clear to me at the moment in the general case whether every $A$-extendible cardinal is $\Sigma_3(A)$-reflecting, since the argument appears to rely on the $\Sigma_1(A)$-correctness of $j(\kappa)$.

\section{Large-cardinal characterization of \Vopenka's principle}\label{Section.VP-characterization}

I shall now give the large-cardinal characterization of the \Vopenka\ principle. 

\begin{theorem}\label{Theorem.VPequivalents} 
The following are equivalent over $\GB+\AC$ set theory. 
 \begin{enumerate}
  \item The \Vopenka\ principle.
  \item For every class $A$ and all sufficiently large $\lambda$, there is an ordinal $\theta>\lambda$ and an elementary embedding $j:\<V_\lambda,\in,A\intersect V_\lambda>\to\<V_\theta,\in,A\intersect V_\theta>$.
  \item For every class $A$, there is an $A$-extendible cardinal.
  \item For every class $A$, there is a stationary proper class of $A$-extendible cardinals.
  \item For every class $A$ and all sufficiently large ordinals $\lambda$, there is a transitive class $M$ and an elementary embedding $j:V\to M$ with some critical point $\kappa<\lambda$, such that $\lambda<j(\kappa)$ and $j(A\intersect V_\lambda)=A\intersect V_{j(\lambda)}$.
 \end{enumerate}
\end{theorem}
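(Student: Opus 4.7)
The plan is to establish all five equivalences by closing a cycle. The main challenge is that the implications among (2), (3), and (5), together with (4)$\Rightarrow$(3), are essentially immediate from Lemma~\ref{Lemma.A-extendible-iff} and the extender construction described in the remark following it, whereas the equivalence with the full \Vopenka\ principle (1) and with the stationary reflection in (4) requires genuine encoding and reflection arguments. I would first dispatch the easy links: (3)$\Rightarrow$(2) by restricting an $A$-extendibility embedding of sufficiently large degree; (3)$\Rightarrow$(5) by the extender ultrapower construction already sketched; (5)$\Rightarrow$(2) by restricting $j:V\to M$ to $V_\lambda$, using that $V_{j(\lambda)}\of M$ together with the hypothesis $j(A\intersect V_\lambda)=A\intersect V_{j(\lambda)}$; and (4)$\Rightarrow$(3) trivially.

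For (1)$\Leftrightarrow$(2) I would argue by contraposition and encoding. To prove (1)$\Rightarrow$(2), suppose toward contradiction that for a proper class of ordinals $\lambda$ no $A$-preserving elementary embedding $\<V_\lambda,\in,A\intersect V_\lambda>\to\<V_\theta,\in,A\intersect V_\theta>$ exists; then the corresponding proper class of such structures has no elementary embeddings between any two of its members, contradicting the \Vopenka\ principle. For (2)$\Rightarrow$(1), given a proper class $\mathcal{M}$ of $\mathcal{L}$-structures, I would use the global axiom of choice to index $\mathcal{M}=\<M_\alpha:\alpha\in\Ord>$ with ranks growing cofinally, and code both this enumeration and the $\mathcal{L}$-structure of each $M_\alpha$ into a single class $A$ so that $A\intersect V_\lambda$ recovers the initial segment $\<M_\alpha:\rank(M_\alpha)<\lambda>$. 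Applying (2) to $A$ at sufficiently large $\lambda$ yields an embedding that, by the encoding, sends $M_\alpha$ to $M_{j(\alpha)}$; restricting to $M_\alpha$ for some $\alpha$ below $\lambda$ with $\alpha\geq\cp(j)$ produces an $\mathcal{L}$-elementary embedding between distinct members of $\mathcal{M}$.

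The main obstacle is (2)$\Rightarrow$(3), producing a single $A$-extendible cardinal from the local assumption (2), which a priori supplies only one embedding per degree with no control over the critical point. My strategy follows the template of Theorem~\ref{Theorem.Stretchable-set}. Assuming toward contradiction that no $A$-extendible cardinal exists, Lemma~\ref{Lemma.A-extendible-iff} provides, for each ordinal $\kappa$, a least degree $f(\kappa)>\kappa$ at which no $A$-preserving embedding has critical point $\kappa$, yielding a class function $f$ by global choice. Applying (2) to the augmented class $A\oplus f$ at a sufficiently large $\mu$ gives an embedding $j:\<V_\mu,\in,(A\oplus f)\intersect V_\mu>\to\<V_\nu,\in,(A\oplus f)\intersect V_\nu>$ with some critical point $\kappa$. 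If $f(\kappa)<\mu$, then $j\restrict V_{f(\kappa)}$ is an $A$-preserving embedding of degree $f(\kappa)$ with critical point $\kappa$, directly contradicting the definition of $f(\kappa)$. The delicate remaining case $f(\kappa)\geq\mu$, in which $\kappa$ is already $A$-extendible up to degree $\mu$, is to be handled by a reflection step inside a still-larger $V_{\bar\mu}$ that can see all relevant embedding data, paralleling the reflection step in the stretchable set argument.

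Finally, for (3)$\Rightarrow$(4), given any class club $C$ and class $A$, I would apply (3) to the augmented predicate $A\oplus C$; the resulting $(A\oplus C)$-extendible cardinal is necessarily a closure point of $C$ by elementarity of its $A\oplus C$-extendibility embeddings, hence lies in $C$, and is $A$-extendible because $A\oplus C$ refines $A$. This closes the cycle and establishes the equivalence of all five statements over $\GB+\AC$.
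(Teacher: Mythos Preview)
Your cycle is sound in outline, but there is a recurring gap: statement (2) does not guarantee that $j$ has a critical point. If $\<V_\lambda,\in,A\intersect V_\lambda>\elesub\<V_\theta,\in,A\intersect V_\theta>$, then the identity inclusion already satisfies (2), and your $(2\Rightarrow 1)$ argument then yields only the identity on each $M_\alpha$, while your $(2\Rightarrow 3)$ argument has no $\kappa$ to feed into $f$. The paper addresses this by applying (2) at level $\lambda+1$ for \emph{singular} $\lambda$: such an embedding must move the largest ordinal $\lambda$, and its critical point cannot be the singular $\lambda$, so it lies strictly below. A second gap is your $(5\Rightarrow 2)$: statement (5) does not assert $V_{j(\lambda)}\of M$, so $j\restrict V_\lambda$ lands only in $V_{j(\lambda)}^M$, which need not be a genuine rank initial segment; the paper repairs this by applying (5) to $A\oplus V$, forcing $j(V_\lambda)=V_{j(\lambda)}$. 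Finally, your ``delicate case'' $f(\kappa)\geq\mu$ in $(2\Rightarrow 3)$ is not handled by reflection as you suggest: passing to a larger $\bar\mu$ may simply produce a different critical point $\bar\kappa$ with the same defect, and nothing in the stretchable-set template pins down a single $\kappa$. The clean repair is to choose $\mu$ closed under $f$ from the outset (club-many such ordinals exist), so that $f(\kappa)<\mu$ automatically and the case never arises.

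With those fixes your $(2\Rightarrow 3)$ via the failure function $f$ does go through, but it is genuinely different from the paper's argument. The paper instead invokes the class version of Fodor's lemma: the map sending each sufficiently large singular $\lambda$ to the least available critical point is regressive, hence constant on a stationary class, and that constant value is $A$-extendible by Lemma~\ref{Lemma.A-extendible-iff}. The Fodor route is shorter and avoids any case analysis, while your approach, once patched, has the mild advantage of not relying on class Fodor. The paper also closes the cycle differently, proving $(4\Rightarrow 1)$ directly by taking an $\mathcal{M}$-extendible cardinal and observing that $j\restrict M:M\to j(M)$ is the desired \Vopenka\ embedding, rather than your encoding-based $(2\Rightarrow 1)$.
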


\begin{proof}
($1\to 2$) Assume towards contradiction that the \Vopenka\ principle holds, but statement $2$ fails for some class $A$. So there are unboundedly many ordinals $\lambda$ for which there is no elementary embedding $j:\<V_\lambda,\in,A\intersect V_\lambda>\to\<V_\theta,\in,A\intersect V_\theta>$ for any ordinal $\theta>\lambda$. Let $\mathcal{M}$ be the class of all such structures $\<V_\lambda,\in,A\intersect V_\lambda>$ having no such embedding. By the \Vopenka\ principle, there is an elementary embedding between two of these structures $j:\<V_\lambda,\in,A\intersect V_\lambda>\to\<V_\theta,\in,A\intersect V_\theta>$, with $\lambda<\theta$, contrary to the inclusion of the former structure in $\mathcal{M}$. 

($2\to 3$) Suppose that $A$ is a class and for some ordinal $\lambda_0$ and all $\lambda\geq\lambda_0$, there is an ordinal $\theta>\lambda$ and an elementary embedding $j:\<V_\lambda,\in,A\intersect V_\lambda>\to\<V_\theta,\in,A\intersect V_\theta>$. For singular $\lambda$, we may assume without loss that $j$ has a critical point below $\lambda$, by considering $j\restrict V_\lambda$ for an embedding $j$ on $V_{\lambda+1}$, which must move $\lambda$, but cannot have $\lambda$ as its critical point. So we have a critical point $\kappa<\lambda$, although different $\lambda$ could have different such critical points. Nevertheless, the map $\lambda\mapsto\kappa$, choosing the smallest such $\kappa$, is a definable pressing-down function, and so by the class version of Fodor's lemma, there is a stationary class of $\lambda$ for which the embeddings all have the same critical point $\kappa$. Thus, this constant value $\kappa$ is the critical point of elementary embeddings $j:\<V_\lambda,\in,A\intersect V_\lambda>\to\<V_\theta,\in,A\intersect V_\theta>$ for unboundedly many ordinals $\lambda$. By restricting these embeddings, it follows that $\kappa$ is the critical point of such embedding for every $\lambda>\kappa$, and so $\kappa$ is $A$-extendible by lemma~\ref{Lemma.A-extendible-iff}.

($3\to 4$) Suppose that for every class $A$, there is an $A$-extendible cardinal. Fix any class $A$ and 
and class club $C$. It suffices to find an $A$-extendible cardinal $\kappa$ in $C$. Let $A\oplus C$ be a class coding both $A$ and $C$, and let $\kappa$ be $A\oplus C$-extendible. Let $\lambda>\kappa$ be at least as large as the next element of $C$ above $\kappa$. Since $\kappa$ is $A\oplus C$-extendible, there is an elementary embedding $j:\<V_\lambda,\in,A\intersect V_\lambda,C\intersect\lambda>\to\<V_\theta,\in,A\intersect V_\theta,C\intersect\theta>$, with critical point $\kappa$ and $j(\kappa)>\lambda$. Since $C\intersect j(\kappa)$ has elements above $\kappa$, it follows that $C$ is not bounded below $\kappa$, and so $\kappa\in C$, as desired.

($4\to 1$) Assume statement $4$, and suppose that $\mathcal{M}$ is a proper class of structures in a first-order language $\mathcal{L}$. Let $\kappa$ be an $\mathcal{M}$-extendible cardinal above the size of the language $\mathcal{L}$. Let $\lambda$ be any ordinal above $\kappa$ for which there is an element $M\in\mathcal{M}$ of rank $\lambda$. Since $\kappa$ is $\mathcal{M}$-extendible, there is an ordinal $\theta$ and an elementary embedding $j:\<V_{\lambda+1},\in,\mathcal{M}\intersect V_{\lambda+1}>\to \<V_{\theta+1},\in,\mathcal{M}\intersect V_{\theta+1}>$ with critical point $\kappa$ and $\lambda<j(\kappa)$. Note that $j(\lambda)=\theta$ and so $\lambda<\theta$. Since $j(M)\in\mathcal{M}$ is a structure of rank $\theta$ in $\mathcal{M}$, it is therefore not identical with $M$. Since the language is fixed pointwise by $j$, it follows that $j\restrict M:M\to j(M)$ is an elementary embedding between distinct elements of $\mathcal{M}$, thus verifying this instance of the \Vopenka\ principle.

($3\iff 5$) The forward implication is immediate by the remarks after the proof of lemma~\ref{Lemma.A-extendible-iff}. For the converse, fix any class $A$, and apply statement $5$ to the class $A\oplus V=(\{0\}\times A)\union(\{1\}\times V)$. Thus, for all sufficiently large ordinals $\lambda$ there is a transitive class $M$ and nontrivial elementary embedding $j:V\to M$ with critical point $\kappa<\lambda$ and $\lambda<j(\kappa)$, and the final condition of statement $5$ for the class $A\oplus V$ amounts to $j(A\intersect V_\lambda)=A\intersect V_{j(\lambda)}$ and $j(V_\lambda)=V_{j(\lambda)}$. This latter condition implies $V_{j(\lambda)}\of M$ and so $\kappa$ is $A$-extendible simply by restricting the embedding to $V_\lambda$.
\end{proof}

Essentially identical arguments work in \ZFC\ with the first-order \Vopenka\ scheme, by considering only definable classes:  

\begin{theorem}\label{Theorem.VSequivalents}
The following are equivalent over $\ZFC$ set theory.
 \begin{enumerate}
  \item The \Vopenka\ scheme.
  \item For every definable class $A$ and all sufficiently large $\lambda$, there is an ordinal $\theta>\lambda$ and an elementary embedding $j:\<V_\lambda,\in,A\intersect V_\lambda>\to\<V_\theta,\in,A\intersect V_\theta>$.
  \item For every definable class $A$, there is an $A$-extendible cardinal.
  \item For every definable class $A$, there is a definably stationary proper class of $A$-extendible cardinals.
  \item For every definable class $A$ and all sufficiently large ordinals $\lambda$, there is a definable transitive class $M$ and a definable nontrivial elementary embedding $j:V\to M$ with critical point below $\lambda$, such that $j(A\intersect V_\lambda)=A\intersect V_{j(\lambda)}$.
 \end{enumerate}
\end{theorem}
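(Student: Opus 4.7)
The plan is to run through the same cycle of implications as in Theorem~\ref{Theorem.VPequivalents}, now carefully tracking definability at each stage. Since the only substantive change is the restriction to first-order definable classes $A$, the arguments should transfer essentially verbatim provided every auxiliary class produced from a definable class $A$ (with parameters) is itself first-order definable from the same parameters. All statements of the theorem are naturally schemes indexed by the defining formula for $A$, and each implication I establish will be an implication of schemes in the same sense.

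First I would verify (1$\to$2): given a definable class $A$ failing statement 2, the class $\mathcal{M}$ of structures $\langle V_\lambda,\in,A\cap V_\lambda\rangle$ admitting no $A$-elementary embedding into a larger such structure is definable from $A$, so the \Vopenka\ scheme applies and yields the desired contradiction directly. For (2$\to$3), the map $\lambda\mapsto\kappa(\lambda)$ picking the least critical point realized at stage $\lambda$ is definable from $A$, and I would invoke the definable-class version of Fodor's lemma, which holds as a theorem scheme in ZFC, to obtain a common critical point $\kappa$ for unboundedly many $\lambda$; by restriction and Lemma~\ref{Lemma.A-extendible-iff}, $\kappa$ is then $A$-extendible. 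For (3$\to$4), I would note that $A\oplus C$ is definable whenever $A$ and $C$ are, so the original club argument transports unchanged and produces an $A$-extendible cardinal inside any prescribed definable club, giving the definable-stationary conclusion. For (4$\to$1), a definable proper class $\mathcal{M}$ of $\mathcal{L}$-structures serves itself as the defining predicate, and the $\mathcal{M}$-extendibility embedding of the previous proof carries over. For (3$\iff$5), I would use that $A\oplus V$ is definable from $A$; the extender ultrapower $j_E:V\to\Ult(V,E)$ derived from an $(A\oplus V)$-extendibility embedding of sufficiently high degree is defined from the set-sized extender $E$ by a fixed first-order formula, so the resulting $j$ and $M$ are first-order definable from $A$ as required by statement 5.

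The main obstacle, such as it is, is precisely this definability bookkeeping, and most delicately the appeal at step (2$\to$3) to the definable version of class Fodor; the remaining steps reduce to observing that finite Boolean operations on definable classes, the pairing $\oplus$, restrictions of elementary embeddings, and the extender ultrapower construction all preserve first-order definability (with the same parameters). No new ideas are required beyond those of Theorem~\ref{Theorem.VPequivalents}, and each implication above yields the corresponding scheme-level implication in ZFC once the parameters defining $A$ are fixed.
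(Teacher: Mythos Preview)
Your proposal is correct and follows exactly the approach the paper indicates: the paper's own proof is simply the remark that ``essentially identical arguments work in \ZFC\ with the first-order \Vopenka\ scheme, by considering only definable classes,'' and your outline supplies precisely the definability bookkeeping that this remark elides. Your identification of the definable-class Fodor step in $(2\to 3)$ as the only point requiring care is apt, and the remaining observations about closure of definable classes under $\oplus$ and the definability of extender ultrapowers from set parameters are exactly what is needed.
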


One may also extract a version of the theorem for \Vopenka\ cardinals. Namely, a cardinal $\delta$ is a {\df Vopenka cardinal}, if $\delta$ is inaccessible and for every set $\mathcal{M}\of V_\delta$ of $\delta$ many first-order $\mathcal{L}$-structures, with $\mathcal{L}$ of size less than $\delta$, there are structures $M\neq N$ in $\mathcal{M}$ with an elementary embedding $j:M\to N$ between them. This is equivalent to saying that the structure $\<V_\delta,\in,V_{\delta+1}>$, a model of Kelley-Morse set theory whose sets are the elements of $V_\delta$ and whose classes include all subsets of $V_\delta$, is a model of the \Vopenka\ principle. For example:

\begin{theorem}\label{Theorem.VP-cardinal-equivalents}
 The following are equivalent, for any inaccessible cardinal $\delta$. 
 \begin{enumerate}
  \item $\delta$ is a \Vopenka\ cardinal.
  \item For every $A\of V_\delta$ and all sufficiently large $\lambda<\delta$, there is an ordinal $\theta$ with $\lambda<\theta<\delta$ and an elementary embedding $j:\<V_\lambda,\in,A\intersect V_\lambda>\to\<V_\theta,\in,A\intersect V_\theta>$.
  \item For every $A\of V_\delta$, there is a $(\ltdelta,A)$-extendible cardinal. That is, there is $\kappa<\delta$ such that for every $\lambda$ with $\kappa<\lambda<\delta$, there is an elementary embedding $j:\<V_\lambda,\in,A\intersect V_\lambda>\to\<V_\theta,\in,A\intersect V_\theta>$, with critical point $\kappa$ and $\lambda<j(\kappa)$. 
  \item For every $A\of V_\delta$, there is a stationary set of such $(\ltdelta,A)$-extendible cardinals below $\delta$.
 \end{enumerate}
\end{theorem}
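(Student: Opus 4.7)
The plan is to run the proof of Theorem~\ref{Theorem.VPequivalents} inside $V_\delta$, exploiting that $\delta$ is inaccessible (so each relevant structure lies in $V_\delta$ and all the needed codings stay bounded below $\delta$) and that $\delta$ is regular (so that Fodor's lemma applies on $\delta$). The four conditions are just the bounded analogues of items (1)--(4) of that theorem, and I would prove the equivalences in the cycle $(1)\to(2)\to(3)\to(4)\to(1)$ by the same arguments as before, with every ordinal restricted to lie below $\delta$.

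For $(1)\to(2)$, I would fix $A\of V_\delta$ and suppose toward contradiction that for some cofinal set of $\lambda<\delta$ there is no elementary embedding $j:\<V_\lambda,\in,A\intersect V_\lambda>\to\<V_\theta,\in,A\intersect V_\theta>$ with $\lambda<\theta<\delta$. The collection $\mathcal{M}$ of such structures is then a subset of $V_\delta$ of size $\delta$ (each structure lies in $V_\delta$ by inaccessibility), so the \Vopenka\ cardinal property of $\delta$ produces an elementary embedding between two of its members, necessarily from one of smaller rank to one of larger rank, contradicting the very definition of $\mathcal{M}$.

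For $(2)\to(3)$, for each sufficiently large $\lambda<\delta$ I would fix a witnessing embedding $j_\lambda$; as in the earlier proof, I may arrange its critical point $\kappa_\lambda$ to be strictly below $\lambda$ by passing to an embedding on $V_{\lambda+1}$. Applying Fodor's lemma on the regular cardinal $\delta$ to the pressing-down map $\lambda\mapsto\kappa_\lambda$ yields a stationary set on which the critical points coincide at some $\kappa$. Restricting embeddings from unboundedly many $\lambda<\delta$ then gives critical-point-$\kappa$ embeddings for every $\lambda$ with $\kappa<\lambda<\delta$, and the bounded analogue of Lemma~\ref{Lemma.A-extendible-iff}---proved by the same induction, which stays below $\delta$ by inaccessibility---lets me recover the $\lambda<j(\kappa)$ clause and conclude that $\kappa$ is $(\ltdelta,A)$-extendible.

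For $(3)\to(4)$, I would mimic the corresponding step of Theorem~\ref{Theorem.VPequivalents}: given $A\of V_\delta$ and a club $C\of\delta$, let $\kappa$ be $(\ltdelta,A\oplus C)$-extendible, pick $\lambda<\delta$ past the next element of $C$ above $\kappa$, and use the embedding at $\lambda$ to conclude $\kappa\in C$. For $(4)\to(1)$, given $\mathcal{M}\of V_\delta$ of size $\delta$ in a language $\mathcal{L}$ of size less than $\delta$, I would take a $(\ltdelta,\mathcal{M})$-extendible cardinal $\kappa$ exceeding $|\mathcal{L}|$, pick $\lambda<\delta$ with some $M\in\mathcal{M}$ of rank $\lambda$, apply extendibility on $V_{\lambda+1}$, and read off an elementary embedding $j\restrict M:M\to j(M)$ between two distinct elements of $\mathcal{M}$. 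The main obstacle is bookkeeping rather than new ideas: each step must keep its witnesses strictly below $\delta$, which is precisely what inaccessibility (for coding and closure under $\lambda\mapsto V_\lambda$ and $A\cap V_\lambda$) and regularity (for Fodor and for termination of the induction in the bounded Lemma~\ref{Lemma.A-extendible-iff}) supply.
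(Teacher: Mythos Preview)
Your proposal is correct and takes essentially the same approach as the paper, which simply says that ``the proof follows the same argument as in theorem~\ref{Theorem.VPequivalents}.'' You have accurately spelled out how each step localizes below $\delta$, invoking inaccessibility to keep the structures inside $V_\delta$ and regularity for Fodor's lemma, exactly as intended.
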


\noindent The proof simply follows the same argument as in theorem~\ref{Theorem.VPequivalents}.

As an analogue of the \Vopenka\ scheme at this level, let us define that $\delta$ is a {\df \Vopenka-scheme cardinal}, if $V_\delta$ satisfies \ZFC\ plus the \Vopenka\ scheme. The difference is whether the classes are definable or not and also whether inaccessibility is required. We similarly get a list of equivalents in analogy with theorem~\ref{Theorem.VSequivalents}.

\begin{theorem}\label{Theorem.Vopenka-scheme-cardinal-equivalents}
 The following are equivalent for any cardinal $\delta$. 
 \begin{enumerate}
  \item $\delta$ is a \Vopenka-scheme cardinal.
  \item For every $A\of V_\delta$ definable in $\<V_\delta,\in>$ and all sufficiently large $\lambda<\delta$, there is an ordinal $\theta<\delta$ and an elementary embedding $j:\<V_\lambda,\in,A\intersect V_\lambda>\to\<V_\theta,\in,A\intersect V_\theta>$.
  \item For every $A\of V_\delta$ definable in $\<V_\delta,\in>$, there is a $(\ltdelta,A)$-extendible cardinal. That is, there is $\kappa<\delta$ such that for every $\lambda$ with $\kappa<\lambda<\delta$, there is an elementary embedding $j:\<V_\lambda,\in,A\intersect V_\lambda>\to\<V_\theta,\in,A\intersect V_\theta>$, with critical point $\kappa$ and $\lambda<j(\kappa)$.
  \item For every such definable $A\of V_\delta$, there is a definably stationary set of such $(\ltdelta,A)$-extendible cardinals below $\delta$.
 \end{enumerate}
\end{theorem}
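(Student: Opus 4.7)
The plan is to mirror the proof of Theorem~\ref{Theorem.VSequivalents} inside the model $\<V_\delta,\in>$, interpreting ``definable class'' as ``class definable in $\<V_\delta,\in>$ with parameters from $V_\delta$,'' and bounding every quantifier over ordinals by $\delta$. Each step of that earlier theorem carries over, provided the witnesses produced remain below $\delta$.

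For $(1\Rightarrow 2)$, the subclass $\mathcal{M}\of V_\delta$ consisting of structures $\<V_\lambda,\in,A\intersect V_\lambda>$ with $\lambda<\delta$ admitting no embedding into any $\<V_\theta,\in,A\intersect V_\theta>$ with $\lambda<\theta<\delta$ is definable in $\<V_\delta,\in>$; if (2) failed, it would be a definable proper class, and the \Vopenka\ scheme applied in $V_\delta$ would produce an embedding between two distinct members with strictly increasing rank (since elementary embeddings between transitive structures go upward), contradicting membership in $\mathcal{M}$. For $(2\Rightarrow 3)$, I would follow the pressing-down argument of $(2\Rightarrow 3)$ in Theorem~\ref{Theorem.VPequivalents}: restrict a witnessing embedding on $V_{\lambda+1}$ to $V_\lambda$ for singular $\lambda<\delta$, extract a critical point $\kappa<\lambda$, and stabilize $\kappa$ by applying Fodor's lemma inside $V_\delta$ to the resulting definable regressive map; the localized version of Lemma~\ref{Lemma.A-extendible-iff} inside $V_\delta$ then yields $(\ltdelta,A)$-extendibility. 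For $(3\Rightarrow 4)$, given a definable class club $C\of\delta$ and definable $A$, I code them as $A\oplus C$, take a $(\ltdelta,A\oplus C)$-extendible $\kappa$, and observe via elementarity applied to an embedding witnessing some $\lambda>\min(C\setminus(\kappa+1))$ that $C$ is unbounded below $\kappa$, whence $\kappa\in C$ by closure.

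For $(4\Rightarrow 1)$, I need to verify separately the \ZFC\ axioms and the \Vopenka\ scheme in $V_\delta$. Given a definable proper class $\mathcal{M}\of V_\delta$ of first-order $\mathcal{L}$-structures with $|\mathcal{L}|<\delta$, take a $(\ltdelta,\mathcal{M})$-extendible $\kappa>|\mathcal{L}|$, a member $M\in\mathcal{M}$ of rank $\lambda$ with $\kappa<\lambda<\delta$, and a witnessing embedding $j:\<V_{\lambda+1},\in,\mathcal{M}\intersect V_{\lambda+1}>\to\<V_{\theta+1},\in,\mathcal{M}\intersect V_{\theta+1}>$ with $\theta<\delta$; then $j(M)\in\mathcal{M}$ has rank $\theta\neq\lambda$, and $j\restrict M:M\to j(M)$ is elementary between distinct members of $\mathcal{M}$. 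That $V_\delta\satisfies\ZFC$ follows from statement (4) via the abundance of $(\ltdelta,A)$-extendibles (each inaccessible), which forces $\delta$ to be a strong-limit regular cardinal and hence worldly. I expect the main subtlety to be the Fodor step in $(2\Rightarrow 3)$: strictly speaking it needs $\delta$ to be regular, which I would establish by a preliminary argument showing statement (2) alone forces inaccessibility of $\delta$---cofinally many embeddings with critical points below $\delta$ cannot pile up at a single cofinality witness without triggering a Kunen-style fixed-point contradiction on the supremum of an appropriate critical sequence.
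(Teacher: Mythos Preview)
Your overall approach---running the proof of theorem~\ref{Theorem.VSequivalents} internally in $\<V_\delta,\in>$---is exactly what the paper intends; indeed the paper offers no proof beyond saying the result is obtained ``in analogy with theorem~\ref{Theorem.VSequivalents}.'' Your implications $(1\Rightarrow 2)$, $(3\Rightarrow 4)$, and the \Vopenka-scheme half of $(4\Rightarrow 1)$ are fine as written.

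However, your handling of the subtlety you flag is mistaken. You propose to rescue the Fodor step in $(2\Rightarrow 3)$ and the \ZFC\ verification in $(4\Rightarrow 1)$ by arguing that $\delta$ must be regular, indeed inaccessible. This is false: corollary~\ref{Corollary.Every-Vopenka-cardinal-is-limit-of-Vopenka-scheme-cardinals} shows that below any \Vopenka\ cardinal there is a \emph{club} of \Vopenka-scheme cardinals, and a club contains plenty of singular points, so singular \Vopenka-scheme cardinals exist. Your Kunen-style sketch for forcing regularity of $\delta$ cannot succeed. The correct observation is that the class Fodor lemma you need is an \emph{internal} fact about $\<V_\delta,\in>$: once $V_\delta\satisfies\ZFC$, the definable-class version of Fodor holds there because replacement ensures that no definable map from a set is cofinal in $\Ord^{V_\delta}$---regardless of whether $\delta$ is externally regular. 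So the Fodor step works verbatim inside $V_\delta$, provided $V_\delta\satisfies\ZFC$.

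This leaves the genuine residual point: do statements $(2)$--$(4)$ by themselves imply $V_\delta\satisfies\ZFC$? The paper is silent here, and the natural reading, parallel to theorem~\ref{Theorem.VSequivalents} being stated ``over \ZFC,'' is that the equivalences are meant relative to the hypothesis $V_\delta\satisfies\ZFC$ (which is part of $(1)$ and may be carried through the cycle). If you want the implications to stand individually for an arbitrary cardinal $\delta$, you would need a separate argument that, say, $(3)$ implies $V_\delta\satisfies\ZFC$; but that argument is not ``$\delta$ is inaccessible,'' and you should not claim it is.
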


Let us briefly call attention to a consequence of statement 4 in each of the theorems we have proved in this section. Namely, the \Vopenka\ principle implies that $\Ord$ is Mahlo, meaning that every class club $C\of\Ord$ contain a regular cardinal, since indeed, every such club contains an $A$-extendible cardinal, for any class $A$, and such cardinals are supercompact and more. Similarly, the \Vopenka\ scheme implies that $\Ord$ is definably Mahlo, the scheme asserting that every definable class club $C\of\Ord$ contains a regular cardinal, and indeed, it contains an $A$-extendible cardinal for any definable class $A$. The same idea shows that every \Vopenka\ cardinal is Mahlo, and every every \Vopenka-scheme cardinal is definably Mahlo.

\begin{corollary}\label{Corollary.Every-Vopenka-cardinal-is-limit-of-Vopenka-scheme-cardinals}
 Every \Vopenka\ cardinal $\delta$ has a club set of \Vopenka\ scheme cardinals below $\delta$. In particular, there is a stationary set of inaccessible \Vopenka\ scheme cardinals below $\delta$, and indeed, a stationary set of $(\ltdelta,A)$-extendible \Vopenka\ scheme cardinals below $\delta$, for any particular $A\of V_\kappa$.
\end{corollary}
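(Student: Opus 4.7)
The plan is to reduce to standard reflection inside $V_\delta$, exploiting the strength of $\delta$ being a \Vopenka\ cardinal. First I would observe that by the discussion preceding Theorem~\ref{Theorem.VP-cardinal-equivalents}, the structure $\<V_\delta,\in,V_{\delta+1}>$ is a model of Kelley--Morse set theory together with the \Vopenka\ principle, and so the first-order reduct $\<V_\delta,\in>$ is a model of $\ZFC+\VS$. The goal is therefore to locate many $\alpha<\delta$ with $V_\alpha\models\ZFC+\VS$ and then to meet these with the relevant stationary sets supplied by the earlier theorems of this section.

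Since $\delta$ is inaccessible and hence regular of uncountable cofinality, the reflection theorem inside $V_\delta$ yields for each $n<\omega$ a club $C_n\of\delta$ of ordinals $\alpha$ with $V_\alpha\prec_{\Sigma_n}V_\delta$. The intersection $C=\bigcap_{n<\omega}C_n$ remains a club in $\delta$, and every $\alpha\in C$ satisfies $V_\alpha\prec V_\delta$. Applying elementarity one instance at a time to the $\VS$ scheme, we conclude $V_\alpha\models\ZFC+\VS$, so every $\alpha\in C$ is a \Vopenka\ scheme cardinal. This exhibits the required club of \Vopenka\ scheme cardinals below $\delta$.

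For the stationary assertions I would simply intersect this club $C$ with stationary sets already at hand. Since every \Vopenka\ cardinal is Mahlo, as noted in the remarks concluding this section, the inaccessible cardinals below $\delta$ form a stationary set, and meeting them with $C$ produces a stationary set of inaccessible \Vopenka\ scheme cardinals. Similarly, for any fixed $A\of V_\delta$, statement~4 of Theorem~\ref{Theorem.VP-cardinal-equivalents} furnishes a stationary set of $(\ltdelta,A)$-extendible cardinals below $\delta$; intersecting with $C$ yields the promised stationary set of $(\ltdelta,A)$-extendible \Vopenka\ scheme cardinals below $\delta$.

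The only real subtlety is that $\VS$ is a scheme, so its preservation under $V_\alpha\prec V_\delta$ must be verified instance by instance, and combining the $\Sigma_n$-reflection clubs simultaneously requires that $\delta$ have uncountable cofinality; both points are automatic from the inaccessibility of $\delta$, so no deeper obstacle arises.
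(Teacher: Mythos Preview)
Your argument is correct and follows essentially the same route as the paper: exhibit the club $C=\{\alpha<\delta : V_\alpha\prec V_\delta\}$, observe that each instance of $\VS$ is first-order and hence reflects, and then intersect $C$ with the stationary set of $(\ltdelta,A)$-extendible cardinals supplied by statement~4 of Theorem~\ref{Theorem.VP-cardinal-equivalents}. Your added remarks---that the club arises as $\bigcap_n C_n$ using the uncountable cofinality of $\delta$, and that the Mahlo property handles the inaccessible case separately---are harmless elaborations of steps the paper leaves implicit.
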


\begin{proof}
Suppose that $\delta$ is a \Vopenka\ cardinal. The collection of $\gamma<\delta$ for which $V_\gamma\elesub V_\delta$ is club in $\delta$. Since any instance of the \Vopenka\ scheme is first-order expressible, it follows that all such $\gamma$ are \Vopenka\ scheme cardinals. And since by theorem~\ref{Theorem.VP-cardinal-equivalents} statement $4$, the collection of $(\ltdelta,A)$-extendible cardinals below $\delta$ is stationary, by intersecting with the club it follows that there is a stationary set of $(\ltdelta,A)$-extendible \Vopenka\ scheme cardinals below $\delta$.
\end{proof}

In particular, the existence of a \Vopenka\ cardinal is strictly stronger in consistency strength over \ZFC\ than the existence of a \Vopenka\ scheme cardinal, and indeed, stronger than an extendible \Vopenka\ scheme cardinal. Since the \Vopenka\ cardinals are analogous to the \Vopenka\ principle in the way that \Vopenka\ scheme cardinals are analogous to the \Vopenka\ scheme, this might suggest that the \Vopenka\ principle should be stronger in consistency strength than the \Vopenka\ scheme. But that conclusion would be incorrect, as the main results of this article show that \GBC\ plus the \Vopenka\ principle is in fact equiconsistent with and indeed conservative over \ZFC\ plus the \Vopenka\ scheme. For this reason, it is more correct to say that \Vopenka\ cardinals are analogous to Kelley-Morse \KM\ set theory plus the \Vopenka\ principle than to \GBC\ plus the \Vopenka\ principle, and the former theory is strictly stronger than the latter, for essentially similar reasons as in the proof of corollary~\ref{Corollary.Every-Vopenka-cardinal-is-limit-of-Vopenka-scheme-cardinals}.

\section{Separating the principle from the scheme}

Let us now establish the first part of the main theorem by proving theorem~\ref{Theorem.VS+notVP}, which shows that the two \Vopenka\ axioms, if consistent, are not equivalent. This result was part of my answer~\cite{Hamkins2010.MO46538:Can-Vopenkas-principle-be-violated-definably?} to a question posted on MathOverflow by Mike Shulman, who had inquired whether there would always be a definable counterexample to the \Vopenka\ principle, whenever it should happen to fail. I interpret the question as asking whether the \Vopenka\ scheme is necessarily equivalent to the \Vopenka\ principle, and the answer is negative.

\begin{theorem}\label{Theorem.VS+notVP}
 If the \Vopenka\ scheme holds, then there is a class forcing extension $V[G]$, not adding sets, in which the \Vopenka\ scheme continues to hold, but the \Vopenka\ principle fails. 
\end{theorem}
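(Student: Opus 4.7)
The plan is to produce $V[G]$ via a class forcing that adds a single new generic subclass $A\subseteq\Ord$ but no new sets, and then verify that in $V[G]$ the \Vopenka\ scheme persists while the \Vopenka\ principle fails, witnessed by $A$. For the forcing $\P$ I would use the class of all functions $p:\alpha\to 2$ for ordinals $\alpha$, ordered by reverse end-extension. The union of a set-length descending sequence of conditions is again a condition, so $\P$ is $\kappa$-strategically closed for every cardinal $\kappa$ and hence adds no new sets; standard tame class-forcing machinery preserves $\GBC$ in the extension. The generic filter $G$ defines a new class $A=\{\alpha:\exists p\in G,\,p(\alpha)=1\}$, and a density argument shows $A\notin V$, since for any $V$-definable class $B$ the conditions disagreeing with $B$ at some ordinal below their domain form a dense class.

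Persistence of the \Vopenka\ scheme in $V[G]$ is essentially automatic: $V$ and $V[G]$ share the same sets, so each definable class of $V[G]$ with set parameters agrees with the correspondingly definable class of $V$, and the \Vopenka\ assertion for such a class, being the existence of a set-sized elementary embedding between set-sized members, is absolute between $V$ and $V[G]$. Thus each instance of \VS\ in $V$ gives the corresponding instance in $V[G]$.

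For the failure of the \Vopenka\ principle, by Theorem~\ref{Theorem.VPequivalents} it suffices to show that no cardinal is $A$-extendible in $V[G]$. The first simplification I would use is that for any $X\subseteq V_\lambda$ and any $\in$-elementary $j:V_\lambda\to V_\theta$ in $V$, the expansion $j:\<V_\lambda,\in,X>\to\<V_\theta,\in,j(X)>$ is automatically elementary in the language with the extra predicate, since $j$ is $\in$-elementary with $X$ as a parameter. So the genuine constraint reduces to this: for every $\lambda>\kappa$ there should exist $(j,\theta)\in V$ with $j:V_\lambda\to V_\theta$ elementary, $\mathrm{crit}(j)=\kappa$, $\lambda<j(\kappa)$, and $j(A\cap V_\lambda)=A\cap V_\theta$.

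The main obstacle is showing the generic $A$ fails this constraint at some $\lambda$. I would fix a sufficiently large $\lambda$ and prove density of the class of conditions forcing the failure at degree $\lambda$. The key tool is a cardinal-arithmetic asymmetry: for each fixed $\lambda$ and each $\theta$, the collection of candidate values $j(A\cap V_\lambda)$ as $(j,\theta)$ ranges over witnessing pairs in $V$ is a set in $V$, whereas $A\cap V_\theta$ can be steered past $\theta$ to disagree with every such candidate. By iterating this avoidance through a set-length cofinal sequence of $\theta$'s using the $\kappa$-closure of $\P$, I would assemble a descending chain of conditions whose lower bound forces $A\cap V_\theta\neq j(A\cap V_\lambda)$ for every witnessing $(j,\theta)$ with $\theta$ in some prescribed cofinal class. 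The delicate step is to arrange that blocking the constraint on a cofinal class of $\theta$'s suffices to block it for all $\theta$, and this I would handle by a reflection argument, invoking the abundance of $\Sigma_2$-correct cardinals supplied by \VS\ in $V$ to reflect the $\Sigma_1(A)$-assertion of $A$-extendibility of $\kappa$ at degree $\lambda$ down to a bounded witness already ruled out by the chain.
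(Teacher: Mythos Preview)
Your approach has a genuine gap, and in fact the forcing you chose cannot work: the class Cohen forcing $\Add(\Ord,1)$ you propose is precisely the forcing the paper uses in theorem~\ref{Theorem.Forcing-GBC+VP} to \emph{produce} a model of the full \Vopenka\ principle from the scheme. So in your extension $V[G]$, the \Vopenka\ principle actually holds, and in particular there \emph{is} an $A$-extendible cardinal for your generic $A=G$. The mechanism is the stretchable-set argument of theorem~\ref{Theorem.Stretchable-set}: at any extendible $\kappa$ for which $G\cap\kappa$ happens to be stretchable (and density ensures there are such $\kappa$), the condition $g=G\cap\kappa$ forces that $\kappa$ is $G$-extendible, because for every $\lambda$ the stretchability of $g$ supplies an extendibility embedding $j$ with $j(g)\cap\lambda$ matching any prescribed extension, and then $r=j(j(g)\cap\lambda)$ is a stronger condition that forces $G$-extendibility of $\kappa$ to degree $\lambda$.

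This pinpoints where your density argument breaks. You want, below an arbitrary condition $p$, to find a condition forcing $j(A\cap V_\lambda)\neq A\cap V_\theta$ for \emph{every} witnessing pair $(j,\theta)$; but $\theta$ ranges over a proper class, there is no set-length cofinal sequence of $\theta$'s through $\Ord$, and the closure of $\P$ only helps for set-length descending chains. Your proposed reflection step, pulling an arbitrary $\theta$ down to one already handled, cannot succeed, since the stretchability argument shows the opposite density fact: below any $p$ there is a condition $r$ forcing that $\kappa$ \emph{is} $G$-extendible to degree $\lambda$.

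The paper's proof takes a completely different route. Rather than trying to defeat $A$-extendibility for a Cohen-generic $A$, it forces with closed bounded subsets of $\Ord$ avoiding the regular cardinals to add a class club $C\subseteq\Ord$ containing no regular cardinal. This forcing is also ${\leq}\lambda$-closed for every $\lambda$ and adds no sets, so the first-order definable classes are unchanged and the \Vopenka\ scheme persists. But the new club $C$ witnesses directly that $\Ord$ is not Mahlo in $V[C]$, and since the \Vopenka\ principle implies $\Ord$ is Mahlo (theorem~\ref{Theorem.VPequivalents}, statement 4), the principle fails in $V[C]$. No diagonalization against embeddings is needed.
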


\begin{proof}
Work in \GBC\ set theory, and assume that the \Vopenka\ scheme holds. Force by initial segments to add a club $C\of\Ord$ avoiding the regular cardinals, and let $V[C]$ be the \GBC\ model arising as the forcing extension. For every cardinal $\lambda$, the collection of conditions reaching above $\lambda$ is $\leqlambda$-closed, and so this forcing adds no new sets. Thus, the sets of the forcing extension $V[C]$ are exactly the sets in $V$, but the classes are those definable in the structure $\<V,\in,C>$. Since the first-order definable classes (using only set parameters) are exactly the same in $V[C]$ as in $V$, we have an $A$-extendible cardinal for any such definable class $A$ in $V[C]$. Consequently, the first-order \Vopenka\ scheme continues to hold in $V[C]$. Meanwhile, since the class $C$ destroys ``$\Ord$  is Mahlo'' with respect to the new non-definable classes, such as $C$ itself, it follows by the observations at the end of the previous section that the \Vopenka\ principle fails in $V[C]$, as desired.
\end{proof}

A similar argument applies to the definability-stratification of the \Vopenka\ scheme into definable levels. Specifically, by adding a class club avoiding the $C^{(n)}$-cardinals, one can force the failure of the $\Sigma_{n+1}$-\Vopenka\ scheme, while preserving the $\Sigma_n$-\Vopenka\ scheme. 

\section{Conservativity of the principle over the scheme}

In this final section, I'd like to prove the main conservativity result, namely, that the \Vopenka\ principle is conservative over the \Vopenka\ scheme for first-order statements in the language of set theory. It follows that the two \Vopenka\ axioms are equiconsistent over \GBC.

\begin{theorem}\label{Theorem.Forcing-GBC+VP}
 If \ZFC\ and the \Vopenka\ scheme holds, then there is a class forcing extension, adding classes but no sets, in which \GBC\ and the \Vopenka\ principle holds.
\end{theorem}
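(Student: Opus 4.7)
The plan is to define, in $V$, a class forcing $\P$ whose $V$-generic $G$ produces an extension $V[G]$ with the same sets as $V$, in which every class is definable from $G$ and a set parameter, and in which a proper class of $G$-extendible cardinals exists. By Theorem~\ref{Theorem.VPequivalents} this suffices for $\VP$ in $V[G]$: for any class $A\in V[G]$, definable from $G$ and a set parameter $p$ by some formula, any $G$-extendibility embedding whose source $V_\lambda$ is above $\rank(p)$ and is sufficiently correct is automatically elementary between the $A$-structures as well.

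First I would take $\P$ to be a reverse Easton class iteration of Cohen set forcing, with $\Qdot_\gamma=\Add(\gamma,1)$ at each inaccessible cardinal $\gamma$ and trivial stages elsewhere, under Easton support. A standard chain-condition-and-closure argument shows that $\P$ adds no sets: the tail past any stage $\gamma$ is forced to be $\leqgamma$-strategically closed, while $\P_\gamma$ has size $\gamma$. The generic $G$ packages the stagewise Cohen subsets into a single class of ordinals. The classes of $V[G]$ are precisely those definable from $G$ with set parameters, and a global well-ordering of $V$ can be read off from $G$ together with ground-model well-orderings of each $V_\gamma$, so $V[G]\satisfies\GBC$.

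To construct the needed $G$-extendible cardinals, fix any ordinal $\alpha$ and let $B$ be a definable class in $V$ encoding $\P$ together with the class of ordinals $\gamma$ such that $V_\gamma$ is sufficiently $\Sigma_n$-correct, for a preassigned complexity $n$. By the \Vopenka\ scheme and Theorem~\ref{Theorem.VSequivalents}, there is a $B$-extendible cardinal $\kappa>\alpha$, and by Theorem~\ref{Theorem.Stretchable-set} together with the observation that any bounded subset of $\kappa$ can be absorbed into a stretchable set, there is a $B$-stretchable $g\of\kappa$ coding the initial segment $G\intersect V_\kappa$ of the generic. For any $\lambda>\kappa$, let $h\of\lambda$ code $G\intersect V_\lambda$; by stretchability there is in $V$ an elementary embedding $j:\<V_\lambda,\in,B\intersect V_\lambda>\to\<V_\theta,\in,B\intersect V_\theta>$ with critical point $\kappa$, $\lambda<j(\kappa)$, and $j(g)\intersect\lambda=h$. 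This matching condition is precisely what forces $j$ to carry the piece of the generic encoded by $g$ onto an initial segment compatible with $G$ up through $V_\lambda$, so a standard master-condition lifting argument for reverse Easton iterations promotes $j$ to an elementary embedding $j^+:\<V_\lambda,\in,G\intersect V_\lambda>\to\<V_\theta,\in,G\intersect V_\theta>$ in $V[G]$, witnessing the $G$-extendibility of $\kappa$ at level $\lambda$.

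The main obstacle is this lifting step, together with the final reduction to $A$-extendibility for classes $A\in V[G]$ defined from $G$ by formulas of arbitrary complexity. The stretchability matching guarantees that $j$ sends the generic initial segment $G\intersect V_\kappa$ to something compatible with $G\intersect V_\theta$, and the Easton closure of the tail supports the master condition; the correctness information absorbed into $B$ then ensures that the resulting $G$-extendibility embedding at level $\lambda$ is elementary for the extra $A$-predicate, by reflecting the defining formula of $A$ down to $V_\lambda$. Because the complexity $n$ of the formula defining $A$ can be fixed in advance of choosing $\kappa$, one handles each countable family of formulas separately, and the resulting $G$-extendibility cardinals serve as $A$-extendibility witnesses for $A$ of the corresponding complexity, yielding the full \Vopenka\ principle in $V[G]$.
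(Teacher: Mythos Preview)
Your choice of forcing is a genuine problem: a reverse Easton iteration of $\Add(\gamma,1)$ at inaccessibles \emph{does} add sets---at each nontrivial stage $\gamma$ it adds a new subset of $\gamma$. The closure-and-chain-condition argument you sketch establishes cardinal preservation, not that no sets are added; so this forcing cannot witness the theorem as stated. The paper instead uses the single class forcing $\Add(\Ord,1)$, whose conditions are elements of $2^{\lt\Ord}$; this is ${\leq}\kappa$-closed for every $\kappa$ and hence adds no sets while adding a generic class $G\of\Ord$. With that forcing one also does not need a ``master-condition lifting'': the paper's key trick is purely combinatorial. Given a stretchable $g=G\cap\kappa$ (obtained by density, since any condition extends to a stretchable set at some extendible $\kappa$ above it---note that your appeal to ``any bounded subset of $\kappa$ extends to a stretchable set'' does not apply, as $G\cap\kappa$ need not be bounded), one takes $j$ with $j(g)\cap\lambda=p$, sets $q=j(g)$ and $r=j(q\cap\lambda)$, and observes that $r\cap\lambda=q\cap\lambda$, so $j\restrict V_\lambda$ is already elementary for the predicate $r$ on both sides. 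Thus $r$ is a condition forcing $G$-extendibility of $\kappa$ to degree $\lambda$.

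There is a second gap in your reduction from $G$-extendibility to $A$-extendibility for $A$ definable from $G$. You encode into $B$ the class of $\Sigma_n$-correct ordinals, intending that the source and target of the embedding compute $A$ correctly. But $A$ is defined in $\<V,\in,G>$, so what you need is $\Sigma_n(G)$-correctness in $V[G]$, not $\Sigma_n$-correctness in $V$; the class $B$, built in $V$ before $G$ exists, cannot record this. The paper handles this by passing to the ground-model-definable classes $\dot A=\set{\<a,p>\mid p\forces\varphi(a,z,\dot G)}$ and $\bar A=\set{\<a,p>\mid p\forces\neg\varphi(a,z,\dot G)}$, obtaining a $(\dot A\oplus\bar A\oplus G)$-extendible cardinal $\kappa$ by the earlier argument, and then choosing $\bar\lambda>\lambda$ large enough that every $a\in V_\lambda$ has its $\varphi$-status decided by some $G\restrict\alpha$ with $\alpha<\bar\lambda$; the embedding on $V_{\bar\lambda}$ then restricts to an $A$-extendibility embedding on $V_\lambda$.
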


\begin{proof}
Assume that \ZFC\ and the \Vopenka\ scheme holds in $\<V,\in>$. Let us force the global axiom of choice by adding a $V$-generic Cohen class of ordinals $G\of\Ord$, using the class forcing $\Add(\Ord,1)$, whose conditions are elements of $2^{\lt\Ord}$, each describing a possible initial segment of $G$. The class $G$ is $V$-generic in the sense of meeting every definable dense subclass of this forcing. Since the forcing is $\kappa$-closed for every cardinal $\kappa$, the forcing extension $V[G]$ has the same sets as $V$. The classes of $V[G]$ are those definable in the structure $\<V,\in,G>$, allowing the predicate $G$. It is well-known that this is a model of \GBC, and indeed this is how one proves that \GBC\ is conservative over \ZFC.

Let me start by proving that there is a $G$-extendible cardinal in $V[G]$. First, note that in $V$ there is a stationary proper class of extendible cardinals. By density, there must be an extendible cardinal $\kappa$ for which $G\intersect\kappa$ is stretchable, since by theorem~\ref{Theorem.Stretchable-set} and the remarks after it, any given condition can be extended to a stretchable set at any desired extendible cardinal above that condition. I claim that this $\kappa$ is $G$-extendible in $V[G]$, and in fact, the condition $g=G\intersect\kappa$ will force that $\kappa$ is $G$-extendible. To see this, consider any $\lambda>\kappa$ and any stronger condition $p$ extending $g$. By making $\lambda$ larger, I may assume without loss that $p\of\lambda$. By stretchability, there is an elementary embedding $j:V_{\lambda+1}\to V_{\theta+1}$ with critical point $\kappa$ and $j(\kappa)>\lambda$ for which $j(g)\intersect \lambda=p$. Let $q=j(g)$ and let $r=j(q\intersect\lambda)$. Thus, $j\restrict V_\lambda:\<V_\lambda,\in,q\intersect\lambda>\to\<V_\theta,\in,r>$ is an elementary embedding with critical point $\kappa$ and $\lambda<j(\kappa)$. But note also---and this is the key point---that since $q\intersect\lambda$ extends $g$, we have that $j(q\intersect\lambda)$ extends $j(g)$, which is $q$. So $r$ extends $q$, and in particular, $r\intersect\lambda=q\intersect\lambda$. So we may write the elementary embedding as $j\restrict V_\lambda:\<V_\lambda,\in,r\intersect\lambda>\to\<V_\theta,\in,r\intersect\theta>$. Thus, the condition $r$ forces that $\kappa$ is $G$-extendible at least to degree $\lambda$. But $r$ extended $p$, and so we have proved that for ordinal $\lambda>\kappa$, it is dense that $\kappa$ is $G$-extendible to degree $\lambda$. So by genericity, $\kappa$ is fully $G$-extendible.
 
We may now easily augment the previous argument with any class $A$ that is first-order definable in $\<V,\in>$, in order to find an $A\oplus G$-extendible cardinal $\kappa$ in $V[G]$. Namely, we first find an $A$-extendible cardinal $\kappa$ for which $g=G\intersect\kappa$ is $A$-stretchable. This exists by genericity, using theorem~\ref{Theorem.Stretchable-set} and the fact that there are unboundedly many $A$-extendible cardinals. Now argue that $g$ forces that $\kappa$ is $A\oplus G$-extendible in $V[G]$. For any condition $p$ extending $g$ and ordinal $\lambda>\kappa$, we may assume $p\of\lambda$ and then by $A$-stretchability find an elementary embedding $j:\<V_{\lambda+1},\in,A\intersect V_{\lambda+1}>\to\<V_{\theta+1},\in,A\intersect V_{\theta+1}>$ with critical point $\kappa$ and $\lambda<j(\kappa)$, for which $j(g)\intersect\lambda=p$. Let $q=j(g)$ and $r=j(q\intersect\lambda)$. So again we have $r\intersect\lambda=q\intersect\lambda$ and $j\restrict V_\lambda:\<V_\lambda,\in,A\intersect V_\lambda,r\intersect\lambda>\to \<V_\theta,\in,A\intersect V_\theta,r\intersect\theta>$ is an elementary embedding with critical point $\kappa$ and $\lambda<j(\kappa)$. So $r$ forces that $\kappa$ is $A\oplus G$-extendible to degree $\lambda$. So there are a dense class of such conditions and therefore by genericity, the cardinal $\kappa$ is $A\oplus G$-extendible to every degree.

The argument until now considered only classes that were definable in the ground model and $G$ itself. So now let us finally consider the case where $A$ is a class that is definable from $G$ in $\<V,\in,G>$. Suppose that $A=\set{a\mid V[G]\satisfies\varphi(a,z,G)}$ for some first-order formula $\varphi$, in which a predicate for $G$ may appear, and parameter $z$. By the definability of the forcing relation, we know that the classes $\dot A=\set{\<a,p>\mid p\forces\varphi(a,z,\dot G)}$ and
$\bar A=\set{\<a,p>\mid p\forces\neg\varphi(a,z,\dot G)}$ are definable in the ground model $V$. By the previous paragraph, there is a cardinal $\kappa$ that is $(\dot A\oplus\bar A\oplus G)$-extendible. Fix any $\lambda$, and let $\bar\lambda>\lambda$ be large enough so that for every $a\in V_\lambda$, there is a condition $p=G\intersect\alpha$ with $\alpha<\bar\lambda$ that decides $\varphi(a,z,\dot G)$. Let $$j:\<V_{\bar\lambda},\in,\dot A\intersect V_{\bar\lambda},\bar A\intersect V_{\bar\lambda},G\intersect\bar\lambda>\to\<V_{\bar\theta},\in,\dot A\intersect V_{\bar\theta},\bar A\intersect V_{\bar\theta},G\intersect\bar\theta>$$ witness the $(\dot A\oplus\bar A\oplus G)$-extendibility of $\kappa$. The assumption on $\bar\lambda$ ensures that $A\intersect V_\lambda$ and $A\intersect V_\theta$, where $\theta=j(\lambda)$, are definable respectively in these two structures. Thus, $j\restrict V_\lambda:\<V_\lambda,\in,A\intersect V_\lambda>\to \<V_\theta,\in,A\intersect V_\theta>$ is an elementary embedding witnessing this instance of the $A$-extendibility of $\kappa$. So $\kappa$ is $A$-extendible, and we have therefore proved that every class $A$ in $V[G]$ admits an $A$-extendible cardinal. By theorem~\ref{Theorem.VPequivalents}, therefore, $V[G]$ is a model of the \Vopenka\ principle, as desired.
\end{proof}

It seems likely to me that the stretchability idea will be central in future $A$-extendibility lifting arguments, as it provides an extendibility analogue for the master condition technique.

\begin{corollary}\label{Corollary.VP-conservative-over-VS}
 The two \Vopenka\ axioms have exactly the same first-order consequences in the language of set theory. Specifically, \GBC\ plus the \Vopenka\ principle proves a first-order statement $\phi$ in the language of set theory if and only if \ZFC\ plus the \Vopenka\ scheme proves $\phi$. 
 $$\GBC+\VP\proves\phi\qquad\text{if and only if}\qquad\ZFC+\VS\proves\phi$$
\end{corollary}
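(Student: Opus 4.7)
The plan is to establish both directions of the biconditional, with Theorem \ref{Theorem.Forcing-GBC+VP} supplying essentially all of the substance for the nontrivial direction. The direction $\ZFC + \VS \proves \phi \Rightarrow \GBC + \VP \proves \phi$ is straightforward: if $\langle V, \in, \mathcal{C}\rangle \satisfies \GBC + \VP$, then its first-order part $\langle V, \in\rangle$ satisfies $\ZFC$, and because every first-order definable class (with set parameters) lies in $\mathcal{C}$ by the class comprehension axiom of \GBC, the \Vopenka\ principle applied to definable proper classes of structures yields precisely the \Vopenka\ scheme. Hence $\langle V, \in\rangle \satisfies \ZFC + \VS$, so $\langle V, \in\rangle \satisfies \phi$, and by completeness $\GBC + \VP \proves \phi$.

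For the nontrivial direction, $\GBC + \VP \proves \phi \Rightarrow \ZFC + \VS \proves \phi$, I would argue semantically. Suppose $\ZFC + \VS \not\proves \phi$. By \Goedel's completeness theorem together with \Lowenheim-Skolem, there is a countable model $\langle V, \in\rangle \satisfies \ZFC + \VS + \neg\phi$. Applying Theorem \ref{Theorem.Forcing-GBC+VP} to this $V$ produces a class forcing extension $V[G]$, adding classes but no sets, in which $\GBC + \VP$ holds. Since $V$ and $V[G]$ share the same sets under the same membership relation and $\phi$ is first-order in the language of set theory, the truth value of $\phi$ is unchanged between the two models, so $V[G] \satisfies \neg\phi$. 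Thus $\GBC + \VP + \neg\phi$ has a model, giving $\GBC + \VP \not\proves \phi$.

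Since the substantive content of the corollary has been packaged into Theorem \ref{Theorem.Forcing-GBC+VP}, no real obstacle remains at this stage; the corollary reduces to a routine semantic transfer via completeness. The only mild subtlety is that the forcing construction is most cleanly carried out over a countable ground model, whence the explicit appeal to \Lowenheim-Skolem; alternatively one could run the forcing internally as a Boolean-valued interpretation over any model of $\ZFC + \VS$. The equiconsistency of the two axioms over \GBC\ then drops out as the special case where $\phi$ is a contradiction.
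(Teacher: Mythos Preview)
Your proposal is correct and follows essentially the same route as the paper: the easy direction uses that $\VP$ implies $\VS$, and the substantive direction applies theorem~\ref{Theorem.Forcing-GBC+VP} to expand any model of $\ZFC+\VS$ to one of $\GBC+\VP$ with the same first-order part, so that a first-order $\phi$ failing in the former also fails in the latter. Your explicit appeal to \Lowenheim--Skolem to secure a countable ground model (so that a generic for the class forcing actually exists externally) is a point the paper glosses over, but otherwise the arguments coincide.
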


\noindent 
In other words, \GBC\ plus the \Vopenka\ principle is conservative over \ZFC\ plus the \Vopenka\ scheme. 

\begin{proof}
Since the \Vopenka\ principle implies the \Vopenka\ scheme, the converse direction is immediate. For the forward implication, theorem~\ref{Theorem.Forcing-GBC+VP} shows that every model of \ZFC\ plus the \Vopenka\ scheme can be expanded to a model of \GBC\ plus the \Vopenka\ principle, with the same first-order part. So any first-order statement that fails in some model of \ZFC\ plus the \Vopenka\ scheme also fails in some model of \GBC\ plus the \Vopenka\ principle. So if a first-order statement is provable in \GBC\ plus the \Vopenka\ principle, then it must hold in all models of \ZFC\ plus the \Vopenka\ scheme, and hence it is provable in that theory. 
\end{proof}

In particular, if one of the theories proves a contradiction, then so does the other, and therefore:

\begin{corollary}\label{Corollary.VP-equiconsistent-VS}
 \GBC\ plus the \Vopenka\ principle is equiconsistent with \ZFC\ plus the \Vopenka\ scheme. 
\end{corollary}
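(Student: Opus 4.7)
The plan is to obtain equiconsistency as an immediate byproduct of the conservativity result of Corollary~\ref{Corollary.VP-conservative-over-VS}, without any further forcing or model construction. The key observation is that the absurd statement $\bot$ (say, ``$0=1$'') is itself a first-order sentence in the language of set theory, and so falls within the scope of the conservativity theorem.

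First I would argue the easy direction: any model of $\GBC+\VP$ is in particular a model of $\ZFC+\VS$, because $\ZFC$ is included in $\GBC$ and the \Vopenka\ principle, quantifying over all classes including the definable ones, implies the \Vopenka\ scheme as a scheme of first-order assertions. Hence if $\ZFC+\VS$ is inconsistent, then so is $\GBC+\VP$, giving $\Con(\GBC+\VP)\to\Con(\ZFC+\VS)$.

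For the reverse direction I would invoke Corollary~\ref{Corollary.VP-conservative-over-VS} with $\phi$ taken to be $\bot$. If $\GBC+\VP$ were inconsistent, then $\GBC+\VP\proves\bot$, and since $\bot$ is a first-order sentence in the language of set theory, conservativity yields $\ZFC+\VS\proves\bot$, so $\ZFC+\VS$ would be inconsistent. Contrapositively, $\Con(\ZFC+\VS)\to\Con(\GBC+\VP)$.

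There is no substantive obstacle here; all the real work has been done in Theorem~\ref{Theorem.Forcing-GBC+VP} (which shows every model of $\ZFC+\VS$ expands, via class forcing adding no sets, to a model of $\GBC+\VP$ with the same first-order part) and in the resulting conservativity corollary. The only thing to check is that the conservativity statement applies to $\bot$, which is trivial since $\bot$ contains no class variables and is formulated entirely in the first-order language of set theory.
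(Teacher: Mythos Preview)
Your proposal is correct and matches the paper's approach exactly: the paper derives equiconsistency as an immediate consequence of the conservativity result in Corollary~\ref{Corollary.VP-conservative-over-VS}, noting that if one theory proves a contradiction then so does the other. Your explicit instantiation with $\phi=\bot$ makes this inference precise and is precisely what the paper has in mind.
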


\bibliographystyle{alpha}
\bibliography{MathBiblio,HamkinsBiblio,WebPosts}

\end{document}